\documentclass[10pt,twoside,a4paper,abstract=on]{scrartcl}

\usepackage{authblk}

\usepackage{geometry}
\usepackage{multicol}

\usepackage[utf8]{inputenc}
\usepackage{hyperref}
\usepackage{caption}

\usepackage{todonotes}

\usepackage[toc,title,page]{appendix}

\usepackage{amsmath,amsfonts,amssymb,amsthm}
\usepackage{bbm}
\usepackage{stackengine} % for precise stacking

\usepackage{enumitem}

\usepackage{tikz}  

\usepackage{algorithm}
\usepackage{algpseudocode}

\newtheorem{Theorem}{Theorem}[section]

\newtheorem{Definition}[Theorem]{Definition}
\newtheorem{Proposition}[Theorem]{Proposition}
\newtheorem{Lemma}[Theorem]{Lemma}
\newtheorem{Assumption}[]{Assumption}

\newtheorem{Remark}[Theorem]{Remark}

\usepackage[a-1b]{pdfx}

\usepackage{graphicx}
\usepackage{subcaption}

\usepackage[maxbibnames=50]{biblatex}
\definecolor{electricultramarine}{rgb}{0.25, 0.0, 1.0}
\definecolor{ikb}{rgb}{0.0, 0.18, 0.65}
\definecolor{green(colorwheel)(x11green)}{rgb}{0.16, 0.5, 0.0}

\usepackage[normalem]{ulem}
\newcommand*{\fzcst}[1]{\relax\ifmmode\text{\textcolor{green(colorwheel)(x11green)}{\sout{\ensuremath{#1}}}}\else\textcolor{green(colorwheel)(x11green)}{\sout{#1}}\fi}

\usepackage{todonotes,varwidth}

\newcommand*{\ykcst}[1]{\relax\ifmmode\text{\textcolor{ikb}{\sout{\ensuremath{#1}}}}\else\textcolor{ikb}{\sout{#1}}
\fi}

\title{\vspace{-0.2cm}Existence 
	of Dynamical Low-Rank \\Approximation 
	for SDEs \\ with Locally Lipschitz Coefficients}
\author[1]{Yoshihito Kazashi}
\author[2]{Fabio Nobile}
\author[2]{Fabio Zoccolan}
\affil[1]{Department of Mathematics,
	The University of Manchester, M13 9PL, UK. email: y.kazashi@manchester.ac.uk}
\affil[2]{Institut de Math\'ematiques, \'Ecole Polytechnique F\'ed\'erale de Lausanne, 1015 Lausanne, Switzerland. email: fabio.nobile@epfl.ch, fabio.zoccolan@epfl.ch}
\date{}
\begin{document}
   
   \maketitle
	
	\begin{abstract}
		\noindent Numerical simulations of high-dimensional stochastic differential equations (SDEs), which are increasingly employed in real-world applications, can be unaffordable in terms of computational time and memory. A possible solution is the deployment of reduced order methods (ROMs) that provide fast simulations with good accuracy when dealing with low-rank problems. In the context of SDEs, the Dynamical Low-Rank Approximation (DLRA) already showed remarkable results, in terms of approximation and computational efficiency of computational time because of being completely computed ``on-the-fly".
		
		In this article, we extend the framework of DLRA for SDEs proposed in our primary work \cite{kazashi2025dynamical} by considering locally Lipschitz drift and diffusion with linear-growth bound, by showing the existence of DLRA for this setting.
			\end{abstract}	
    %\tableofcontents
    
    %\listoftodos

	\section*{Introduction}
	\addcontentsline{toc}{section}{Introduction}
	Simulations of physical and engineering systems have been a cornerstone of modern science and technology, providing predictive insights where experiments cannot be afforded; for instance, by being too expensive to pursue or by presenting a high level of risk.
	Beyond these causes, a proper mathematical bottleneck has to be faced. To accurately find multi-scale solutions in fields such as computational fluid dynamics, structural mechanics, and climate modeling, one usually runs into high-dimensional simulations.
	Such simulations often require the discretization of partial differential equations (PDEs) in very large state spaces, leading to prohibitive computational costs, especially in real-time or many-query contexts. The situation can be even harder when dealing with randomness, as the \textit{curse of dimensionality} can arise from the stochastic space, too. 
	
	A possibility to overcome this issue is provided by ROM techniques. These methods usually approximate full-order, high-fidelity models by projecting the governing equations onto low-dimensional subspaces. This strategy allows to drastically reduce the computational costs to efficiently approximate the full-order dynamics when the original problem presents a low-rank structure.
	ROM strategies incorporate techniques such as Proper Orthogonal Decomposition (POD), Balanced Truncation, and Reduced Basis methods, which have been extensively applied in the field of deterministic or random PDEs or numerical linear-algebra \cite{benner2005dimension,benner2020model,hesthaven2016certified,quarteroni2015reduced,schilders2008model}.
	
	For several physical and real-life fields, such as finance, weather forecasting, or biology, the employment of SDEs is absolutely beneficial to model the phenomena studied. Also in this context, the need for efficient high-dimensional simulations is increasingly on high demand. However, reduced order modeling entails several nontrivial issues, including the preservation of stochastic features, stability, and accuracy across varying regime which remains an open area of research.
	
	Among the various model reduction approaches proposed, the so-called \textit{Dynamical Low-Rank Approximation} (DLRA) has started to gain relevance in the context of SDEs, showing promising results in numerical performances and applications.
	DLRA was first proposed for matrix ODEs in \cite{koch2007dynamical} and then it was extensively applied in other contexts with outstanding results, too, for instance, random and deterministic PDEs (see e.g.\ \cite{ceruti2022unconventional,einkemmer2019quasi,einkemmer2021asymptotic,koch2007dynamical,lubich2014projector,kieri2016discretized,kazashi2021existence,kazashi2021stability,bachmayr2021existence,carrel2025interpolatory, sapsis2009dynamically}).
	A first well-posedness analysis of DLRA for SDE was proposed in \cite{kazashi2025dynamical} under standard conditions of Lipschitzianity and linear-growth bound for the coefficients of the SDEs, while a study on its long-time behavior can be found in \cite{bao2026exponential}. A numerical analysis of its time and stochastic discretization was presented in \cite{kazashi2026dynamicalpartI,kazashi2026dynamicalpartII}. Another approach to the problem can be found in \cite{cao2018stochastic}. Regarding applications, a first study on DLRA applied for Kalman-Bucy filtering SDEs has been proposed in \cite{nobile2025dynamical}.
	
	In \cite{kazashi2025dynamical}, DLRA is derived using the so-called \textit{Dynamically-Orthogonal} formalism, first introduced in \cite{sapsis2009dynamically}. This methodology seeks to construct an approximation consisting of two distinct components: a deterministic basis and a stochastic counterpart, both of which are allowed to evolve in time and are computed adaptively.  Unlike traditional ROM techniques, such reduced basis \cite{quarteroni2015reduced} or polynomial chaos expansions \cite{xiu2002wiener}, in DLRA both deterministic and stochastic components are allowed to evolve in time. This feature permits to better track low-rank problems whose most informative low-dimensional subspace is evolving over time, but still leaving untouched the low computational costs. The reference \cite{kazashi2025dynamical} will be the starting point of this work.
	
	The aim of this paper is to expand the hypothesis for SDEs under which DLRA was defined in \cite{kazashi2025dynamical} giving the opportunity to exploit the benefits of this surrogate in a wider range of applications.
	
	More specifically, here existence of DLRA for SDEs is studied under the condition of local Lipschizianity and linear-growth bound of the drift and diffusion, where the proof is achieved via an application of Schauder's theorem. This approach is different from \cite{kazashi2025dynamical}, where a Picard-iteration strategy was developed, and, hence, it is also of interest from the theoretical mathematical point of view. This treatment can be found in Section \ref{sec: well-posedness ll}.
	
	\section{Problem setting}\label{sec:well-posedness SDE}
	In this section, we propose an analysis of existence of DLRA of SDEs with local Lipschitz coefficients satisfying linear-growth condition. The setting is the following.
	
	Let us consider a stochastic basis $\left( \Omega, \mathcal{F}, \mathbb{P}, (\mathcal{F}_t)_{t \geq 0} \right)$, where $\Omega$ is the probability domain, $\mathcal{F}$ is a $\sigma$-algebra on $\Omega$, $\mathbb{P}$ is a probability measure on $\Omega$, and $(\mathcal{F}_t)_{t \geq 0}$ is a standard filtration on the probability space $\left(\Omega, \mathcal{F}, \mathbb{P}\right)$.
	We consider $W$ a real $m$-dimensional $(\mathcal{F}_t)$-Brownian motion, denoted as $W(t) = \left(W_1(t), \ldots , W_m(t) \right)^{\top}.$
	We consider the following SDE
	\begin{equation}\label{eq:SDE-int}
		X^{\mathrm{true}}(t)= X^{\mathrm{true}}_0 + \int_{0}^{t}a(s,X^{\mathrm{true}}(s))\mathrm{d}t+\int_{0}^{t}b(s,X^{\mathrm{true}}(s))\mathrm{d}W_s, \quad \forall t \in [0, +\infty),
	\end{equation}
	where the solution of \eqref{eq:SDE-int} is a stochastic vector valued process $X^{\mathrm{true}}(t)=\left(X^{\mathrm{true}}_1(t), \ldots, X^{\mathrm{true}}_d(t)\right)^{\top}$, $d \in \mathbb{N}$, whereas the drift $a\colon[0,\infty)\times\mathbb{R}^{d}\to\mathbb{R}^{d}$ 
	and the diffusion $b\colon[0,\infty)\times\mathbb{R}^{d}\to\mathbb{R}^{d\times m}$
	are measurable between the Borel fields $\left([0,\infty)\times\mathbb{R}^{d};\mathcal{B}([0,\infty)\times\mathbb{R}^{d})\right)$ and $\left(\mathbb{R}^{d};\mathcal{B}(\mathbb{R}^{d})\right)$, $\left(\mathbb{R}^{d\times m};\mathcal{B}(\mathbb{R}^{d \times m})\right)$, $\mathcal{B}$ denoting the Borel $\sigma$-algebra.
	
	The DLR approximation of \eqref{eq:SDE-int} is a $d$-dimensional process $X_t$ that factorizes at each time $t$ as $X_t =U_t^{\top}Y_t=\sum_{i = 1}^kU_t^{i}Y_t^{i}$ with deterministic $U_t \in \mathbb{R}^{k \times d}$ and stochastic $Y_t \in L^2(\Omega, \mathbb{R}^k)$ where the pair $\left(U,Y\right)$ satisfies the so-called DO equations~\cite{kazashi2025dynamical,sapsis2009dynamically}:
	\begin{align}
		{C}_{Y_{t}}\dot{U}_{t} & =\mathbb{E}[Y_{t}a(t,U_{t}^{\top}Y_{t})^{\top}](I_{d\times d}-P_{U_{t}}^{\mathrm{row}}),\label{eq:DLR-eq-U}\\
		\mathrm{d}Y_{t} & =U_{t}a(t,U_{t}^{\top}Y_{t})\,\mathrm{d}t+U_{t}b(t,U_{t}^{\top}Y_{t})\mathrm{d}W_{t},\label{eq:DLR-eq-Y}
	\end{align}
	where $C_{Y_t}:= \mathbb{E}[Y_tY_{t}^{\top}]$ is the Gram matrix (or Gramian) of the stochastic basis $Y_t$, and $P^{\text{row}}_{U_t}$ is the projector matrix onto the vector space $\operatorname{span}\{U^{1}_t, \ldots, U^{k}_t\} \subset \mathbb{R}^{d}$, where $U^{i}_t$ is the $i$-th row of  $U_t$. If $U_t$ has orthonormal rows, then $P^{\text{row}}_{U_t}=U_t^{\top}U_t$ i.e.\ the Gramian linked to $U_t$ is $U_t U_t^{\top} = I_{k \times k}$. Notice that \eqref{eq:DLR-eq-U} and \eqref{eq:DLR-eq-Y} are strongly coupled equations, hence their well-posedness is not trivial. Moreover, the evolution of the deterministic modes in \eqref{eq:DLR-eq-U} depends on the law of the process making \eqref{eq:DLR-eq-U}--\eqref{eq:DLR-eq-Y} of McKean-Vlasov type.
	
	Given a suitable rank-$k$ approximation $X_0 :=U_0^{\top}Y_0$ of the initial condition $X_0^{\text{true}}$ of \eqref{eq:SDE-int}, whose rank is assumed to be at least $k$ and which preserves the moment bound of $X_0^{\text{true}}$, in this work we consider the following definition of a \textit{strong DO solution} \cite{kazashi2025dynamical}.
	
	\begin{Definition}[strong DO solution of rank $k$]\label{def: D0 sol}
		A function $(U,Y) : [0,T] \to \mathbb{R}^{k\times d} \times L^{2}(\Omega;\mathbb{R}^{k})$ is called a \textit{strong DO solution} of rank $k$ for \eqref{eq:SDE-int} if the following conditions are satisfied:
		\begin{enumerate}
			\item {the initial condition $(U_0,Y_0 )$ is such that $U_0  \in \mathbb{R}^{k\times d}$ is a matrix with orthonormal rows and $Y_0  \in L^{2}(\Omega;\mathbb{R}^{k})$ has linearly independent components};
			\item the curve $t \to U_t \in \mathbb{R}^{k\times d}$ is absolutely continuous on $[0,T]$ and $U_t\dot{U}^{\top}_t = 0 \in \mathbb{R}^{k \times k}$ for a.e. $t \in [0,T]$;
			\item the curve $t \to Y_t(\omega) \in  \mathbb{R}^{k}$ has almost surely continuous paths on $[0,T]$ and is $\mathcal{F}_{t}$-measurable for all $t \in [0,T]$. Moreover,  for any $t \in [0,T]$ the components $Y^{1}_t,\dots,Y^{k}_t$ are linearly independent in $L^2(\Omega)$;
			\item $U$ satisfies equation \eqref{eq:DLR-eq-U} for a.e.~$t \in [0,T]$ and $Y$ is a strong solution of \eqref{eq:DLR-eq-Y} 
			on $[0,T]$.
		\end{enumerate}
	\end{Definition}
	In \cite{kazashi2025dynamical}, it is shown that a unique strong DO solution $(U,Y)$ exists under Lipschitzianity and linear-growth bound conditions in the drift $a$ and the diffusion $b$. The same conditions also guarantee well-posedness of the original problem \eqref{eq:SDE-int} (see e.g.\ \cite[Theorem 3.1]{mao2007stochastic} or \cite[Theorem 2.9]{karatzas2012brownian}). 
	
	\section{Existence under Local Lipschitzianity}\label{sec: well-posedness ll}
	In this section, we extend the existence analysis beyond the global Lipschitz setting \cite{kazashi2025dynamical}. Henceforth, we work under the following assumptions.
	\begin{Assumption}[Moment boundedness of the initial condition]\label{ass:initial value}
		\begin{equation}\label{eq:initial value}
			X^{\mathrm{true}}_0 \mbox{ is } \mathcal{F}_0\mbox{-measurable and satisfies } \mathbb{E}[|X^{\mathrm{true}}_0|^{2+\varepsilon}] < +\infty, \text{ for some } \varepsilon>0.
		\end{equation}
	\end{Assumption}
	\begin{Assumption}[Local Lipschitz]\label{ass: local-lipschitzianity}
		For any $n \in \mathbb{N}$, there exists a constant $L_n >0$ such that for all $|x|,|y| \leq n$ and for all $t\geq0$ it holds
		\begin{equation}\label{local-lipschitzianity}
			| a(t,x) - a(t,y) | + \| b(t,x) - b(t,y) \|_{\mathrm{F}} \leq L_n | x - y |,
		\end{equation}
		where $\|\cdot\|_{\mathrm{F}}$ denotes the Frobenius norm of a matrix.
	\end{Assumption}
	\begin{Assumption}[Linear-growth bound]\label{ass:linear-growth-bound}
		The drift $a$
		and the diffusion $b$ fulfill the following linear-growth bound condition:
		\begin{equation}
			|a(t,x)|^{2}+\|b(t,x)\|_{\mathrm{F}}^{2}\leq C_{\mathrm{lgb}}(1+|x|^{2}), \quad \forall x \in \mathbb{R}^d, t \geq 0, \label{eq:lin-growth}
		\end{equation}
		for some constant $C_{\mathrm{lgb}}>0$.
	\end{Assumption}
	Under Assumptions \ref{ass:initial value},\ref{ass: local-lipschitzianity}, and \ref{ass:linear-growth-bound}, equation \eqref{eq:SDE-int} has a unique strong solution (see for example \cite[Theorem 3.4]{mao2007stochastic}).
	
    To prove existence of the strong DO solution, we consider the following strategy that is streamlined here first, before going in a more detailed treatment. To begin, we aim to find reasonable ``neighborhoods" $\mathbb{D}_{\mathrm{det}}^{\mathrm{loc}}$ and $\mathbb{D}_{\mathrm{sto}}^{\mathrm{loc}}$ of the initial data $U_0$ and $Y_0$, thought as constant functions in time on a suitable interval $[0,T_{\mathrm{loc}}]$, where each element belonging to these sets has linearly independent components in $[0,T_{\mathrm{loc}}]$. 
	This construction implies that each Gram matrix associated to any of those elements is non degenerate in $[0,T_{\mathrm{loc}}]$, thus ensuring that equations \eqref{eq:DLR-eq-U} and \eqref{eq:DLR-eq-Y} are well-defined in the same time interval. Then, we define a map $F$ based on \eqref{eq:DLR-eq-U} and \eqref{eq:DLR-eq-Y} practically mapping elements of these neighborhoods onto the same neighborhoods, and we aim to prove that there exists a fixed point of $F$. The map is built so that the fixed point has to satisfy \eqref{eq:DLR-eq-U} and \eqref{eq:DLR-eq-Y}, and, hence, is a strong DO solution.
    
    Let us start by defining $\mathbb{D}_{\mathrm{det}}^{\mathrm{loc}}$ and $\mathbb{D}_{\mathrm{sto}}^{\mathrm{loc}}$. To define these sets, we take advantage of \cite[Proposition A.1]{kazashi2025dynamical}, whose statement is recalled here for the sake of clarity.
    
    \begin{Proposition} \label{prop: A inverse}
    	For a Hilbert space $(H,\langle\cdot,\cdot\rangle)$,
    	denote by $[H]^{k}$ the product Hilbert space equipped with the
    	norm $\|Z\|_{[H]^{k}}=\sqrt{\sum_{j=1}^{k}\|Z^{j}\|_{H}^{2}}$
    	for $Z=(Z^{j})\in[H]^{k}$. For $Z\in[H]^{k}$
    	let $C_{Z}$ be the Gram matrix 
    	\[
    	C_{Z}:=\left(\langle{Z}^{j},{Z}^{\ell}\rangle\right)_{j,\ell=1,\dots k}\in\mathbb{R}^{k\times k}.
    	\]
    	
    	Suppose that $Z_{0}\in[H]^{k}$ has linearly independent
    	components $Z^{j}_0$, $j=1,\dots,k$ in $H$, and that $\|Z_{0}\|_{[H]^k}\leq\rho$
    	and $\|C_{Z_{0}}^{-1}\|_{\mathrm{F}}\leq\gamma$ hold
    	for $\rho,\gamma>0$. Then, there exists $\eta:=\eta(\rho,\gamma)>0$ {defined as
    		\begin{equation}
    			\eta(\rho,\gamma):=-\rho+\sqrt{\rho^{2}+\frac{1}{2\gamma}}\label{eq:def-eta}
    		\end{equation}
    	}such that we have 
    	\[
    	\|C_{Z}^{-1}\|_{\mathrm{F}}\leq2\gamma,\quad\text{for any }Z\in B_{\eta}(Z_{0}),
    	\]
    	and $\eta(\rho,\gamma)$ is decreasing in both $\rho$ and $\gamma$.
    	Here, $B_{\eta}(Z_{0})$ is the open ball in $[H]^{k}$
    	of radius $\eta$ around $Z_{0}$.
    \end{Proposition}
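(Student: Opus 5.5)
The plan is a perturbation argument for the smallest eigenvalue of the Gram matrix. The map $Z\mapsto C_Z$ is quadratic, so for $Z\in B_\eta(Z_0)$ I would write $C_Z = C_{Z_0} + E$ with
\[
E_{j\ell}=\langle Z^j-Z_0^j, Z_0^\ell\rangle + \langle Z_0^j, Z^\ell-Z_0^\ell\rangle + \langle Z^j-Z_0^j, Z^\ell-Z_0^\ell\rangle .
\]
Set $\delta:=\|Z-Z_0\|_{[H]^k}<\eta$. By Cauchy--Schwarz, applied entrywise and then summed, each term is bounded in Frobenius norm by the product of the corresponding $[H]^k$ norms. Hence
\[
\|E\|_{\mathrm F}\le 2\rho\delta+\delta^2 .
\]
This follows from $\|(\langle A^j,B^\ell\rangle)_{j\ell}\|_{\mathrm F}^2\le \sum_{j,\ell}\|A^j\|^2\|B^\ell\|^2=\|A\|^2_{[H]^k}\|B\|^2_{[H]^k}$.

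Next I would check the choice of $\eta$. The number $\eta$ in \eqref{eq:def-eta} is exactly the positive root of $\eta^2+2\rho\eta=\frac{1}{2\gamma}$. Since $\delta<\eta$, this gives $\|E\|_{\mathrm F}<\frac{1}{2\gamma}$, and therefore $\|C_{Z_0}^{-1}E\|_{\mathrm F}\le \gamma\|E\|_{\mathrm F}<\tfrac12$, using submultiplicativity of the Frobenius norm.

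Then I would write $C_Z=C_{Z_0}(I+C_{Z_0}^{-1}E)$ and invert by a Neumann series. Since $\|C_{Z_0}^{-1}E\|_2\le\|C_{Z_0}^{-1}E\|_{\mathrm F}<1/2$, the series converges and $C_Z$ is invertible. For the Frobenius bound I would avoid a naive estimate, since $\|I\|_{\mathrm F}=\sqrt k$. Instead I would expand
\[
C_Z^{-1}=\sum_{n\ge0}(-C_{Z_0}^{-1}E)^nC_{Z_0}^{-1}
\]
and bound each term by $\|C_{Z_0}^{-1}E\|_{\mathrm F}^n\|C_{Z_0}^{-1}\|_{\mathrm F}$. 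This gives
\[
\|C_Z^{-1}\|_{\mathrm F}\le \frac{\gamma}{1-1/2}=2\gamma .
\]
This Frobenius-versus-operator-norm bookkeeping is the only delicate point, and keeping the factor $C_{Z_0}^{-1}$ on the right of each term handles it.

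Finally, monotonicity. Writing $\eta=\frac{1/(2\gamma)}{\rho+\sqrt{\rho^2+1/(2\gamma)}}$ shows directly that $\eta$ is decreasing in both $\rho$ and $\gamma$.
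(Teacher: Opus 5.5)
Your proof is correct and complete. The paper does not prove this proposition; it only restates it from \cite[Proposition A.1]{kazashi2025dynamical}. Your argument is the perturbation argument that the formula for $\eta$ encodes: bound $\|C_Z-C_{Z_0}\|_{\mathrm F}\le 2\rho\delta+\delta^2$, note that $\eta$ is exactly the positive root of $\eta^2+2\rho\eta=\frac{1}{2\gamma}$, and apply a Neumann series with $C_{Z_0}^{-1}$ kept as the right factor.

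One small point on monotonicity. In your rewritten form both the numerator and the denominator decrease as $\gamma$ grows, so monotonicity in $\gamma$ does not follow from it directly. Read it off the original expression $-\rho+\sqrt{\rho^2+1/(2\gamma)}$ instead, or from $\eta=\bigl(2\gamma\rho+\sqrt{4\gamma^2\rho^2+2\gamma}\bigr)^{-1}$.
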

    
   Defining $\rho:=\|Y_0\|_{[L^{2}(\Omega)]^{k}}$ and $\gamma :=\|{C}_{Y_0}^{-1}\|_{\mathrm{F}}$, and observing that $\|U_0\|_{\mathrm{F}} = \|(U_0U_0^{\top})^{-1}\|_{\mathrm{F}}= \sqrt{k}$, with slight abuse of notation, we redefine
   \begin{equation}\label{eq: eta}
   	\eta=\eta(k,\rho,\gamma):=\min\{-\sqrt{k}+\sqrt{k+\frac{1}{2\sqrt{k}}},-\rho+\sqrt{\rho^{2}+\frac{1}{2\gamma}}\},
   \end{equation}
   where Proposition \ref{prop: A inverse} tells us that $\eta$ is non-increasing in both variables $\rho$ and $\gamma$. We also recall the final time $T$ for which existence for global Lipschitz coefficients is set (for more details, see \cite{kazashi2025dynamical}): 
   \begin{equation}\label{eq:T}
   	T:=\min\{1,\frac{
   		\min\{1,
   		\eta(k,\rho,\gamma)^2 
   		\}}
   	{36kC_{\mathrm{lgb}}(1+3k(3\rho^2 +1))},
   	\frac{
   		\min\{
   		\eta(k,\rho,\gamma)^2
   		,k\}
   	}{8\gamma^2(3\rho^2+1) C_{\mathrm{lgb}}(1+3k(3\rho^2 +1))}\}
   \end{equation}
   Finally, with this choice of $T$ we first define a new time $T_{\mathrm{loc}}$ as 
   \begin{equation}\label{eq:T loc}
   	T_{\mathrm{loc}}:=\min\{1,\frac{
   		\min\{1,
   		\frac{ \eta(k,\rho,\gamma)^2 }{2}
   		\}}
   	{36kC_{\mathrm{lgb}}(1+3k(3\rho^2 +1))},
   	\frac{
   		\min\{
   		\frac{\eta(k,\rho,\gamma)^2 }{2}
   		,k\}
   	}{8\gamma^2(3\rho^2+1) C_{\mathrm{lgb}}(1+3k(3\rho^2 +1))}\},
   \end{equation}
   where $T_{\mathrm{loc}} \leq T$, and then neighborhoods of the constant functions $U_0$ and $Y_0$ as
   \begin{equation}\label{eq:D_det loc}
   	\mathbb{D}_{\mathrm{det}}^{\mathrm{loc}}:=\left\{ V \in C([0,T_{\mathrm{loc}}];\mathbb{R}^{k\times d})\,\left\vert \,\begin{array}{l}
   		\sup\limits_{0\leq t\leq T_{\mathrm{loc}}}\|V_{t}\|_{\mathrm{F}}^{2}\leq3k\text{, and}\\
   		V_{t}\in \overline{B_{\frac{\eta}{2}}(U_0)}\text{ for }t\in[0,T_{\mathrm{loc}}]
   	\end{array}\right\} \right.
   \end{equation}
   and
   \begin{equation}\label{eq:D_sto loc}
   	\mathbb{D}_{\mathrm{sto}}^{\mathrm{loc}}:=\left\{ Z \in L^{2}(\Omega;C([0,T_{\mathrm{loc}}];\mathbb{R}^{k}))\,\left\vert \,\begin{array}{l}
   		\ensuremath{Z}\text{ is \ensuremath{\mathcal{F}_{t}}-adapted, }\\
   		\mathbb{E}\bigl[\sup\limits_{0\leq t\leq T_{\mathrm{loc}}}|Z_{t}|^{2}\bigr]\leq 3\rho^2+1\text{, and}\\
   		Z_{t}\in \overline{B_{\frac{\eta}{2}}(Y_0)}\text{ for }t\in[0,T_{\mathrm{loc}}]
   	\end{array}\right\} \right. .
   \end{equation}
   
 Notice that, unlike the definition of similar sets $\mathbb{D}_{\mathrm{det}}$ and $\mathbb{D}_{\mathrm{sto}}$ using $T$ in \cite[Section 3]{kazashi2025dynamical}, here we consider $T_{\mathrm{loc}}$ and the closed balls $\overline{B_{\frac{\eta}{2}}(U_0)}$ and $\overline{B_{\frac{\eta}{2}}(Y_0)}$ in $\mathbb{D}_{\mathrm{det}}^{\mathrm{loc}}$ and $\mathbb{D}_{\mathrm{sto}}^{\mathrm{loc}}$, respectively, unlike the open ones $B_{\eta}(U_0)$ and $B_{\eta}(Y_0)$ in $\mathbb{D}_{\mathrm{det}}$ and $\mathbb{D}_{\mathrm{sto}}$. Indeed, as $\eta$, defined in \eqref{eq: eta}, appears in the numerator of expression \eqref{eq:T}, we can shrink the time interval $T$ by taking fractions of $\eta$, for instance $\eta \mapsto \frac{1}{2} \eta$, defining a new time $T_{\mathrm{loc}}$ so that every element in the \emph{closed} neighborhoods $\overline{B_{\frac{\eta}{2}}(U_0)}$ and $\overline{B_{\frac{\eta}{2}}(Y_0)}$ have linear independent components in their respective ambient space. Considering $\mathbb{D}_{\mathrm{det}}^{\mathrm{loc}}$ and $\mathbb{D}_{\mathrm{sto}}^{\mathrm{loc}}$ at the place of $\mathbb{D}_{\mathrm{det}}$ and $\mathbb{D}_{\mathrm{sto}}$, respectively, is fundamental to fulfill the hypothesis of closedness requested in the -Schauder's theorem.
 Therefore, an element $(V_t)_t \in \mathbb{D}_{\mathrm{det}}^{\mathrm{loc}}$ and an element $(Z_t)_t \in \mathbb{D}_{\mathrm{sto}}^{\mathrm{loc}}$ has the following property: for each $t$ in $[0,T]$, $V_t$ belongs to $\overline{B_{\frac{\eta}{2}}(U_0)}$ and $Z_t$ belongs to $\overline{B_{\frac{\eta}{2}}(Y_0)}$, where $\eta$ is defined in \eqref{eq: eta}. Then, Proposition \ref{prop: A inverse} implies that $V_t$ and $Z_t$ have linearly independent components in $\mathbb{R}^d$ and in $L^2(\Omega)$ for all $t$, respectively, and, hence, their associated Gram matrices are invertible. 

 In \cite[Section 3]{kazashi2025dynamical}, the existence and uniqueness of a strong DO solution under (global) Lipschitz and linear growth-bound drift and diffusion is established through a Picard argument based on the aforementioned $\mathbb{D}_{\mathrm{det}}$ and $\mathbb{D}_{\mathrm{sto}}$ built with time $T$ equal to \eqref{eq:T}. \cite[Lemma 3.1]{kazashi2025dynamical} yields that under the assumption of Lipschitz condition and linear-growth bound of the drift $a$ and the diffusion $b$, one can build a Picard sequence  $\bigl((U^{(n)},Y^{(n)})\bigr)_{n\geq0}$ defined through \eqref{eq:DLR-eq-U} and \eqref{eq:DLR-eq-Y} such that each element of this iteration belongs to $\mathbb{D}_{\mathrm{det}}\times\mathbb{D}_{\mathrm{sto}}$. Then, existence of a strong DO solution follows by showing that the sequence is contractive and converges to a point $\bigl(U,Y\bigr)$, satisfying the equations \eqref{eq:DLR-eq-U} and \eqref{eq:DLR-eq-Y}, contained in $\mathbb{D}_{\mathrm{det}}\times\mathbb{D}_{\mathrm{sto}}$ (for more details, see \cite[Section 3]{kazashi2025dynamical}).

To prove the existence in case of Assumption \ref{ass: local-lipschitzianity} we do not consider the construction of a Picard iteration, as how to build such a contraction might not be straightforward as in \cite[Section 3]{kazashi2025dynamical} due to the lack of global Lipschitzianity. That is why one cannot directly apply all the same results of \cite[Section 3]{kazashi2025dynamical} to this context. 

\emph{For the sake of notation, from now on we will denote $T_{\mathrm{loc}}$ in \eqref{eq:T loc} by $T$.}
  
 Let us define the set
  \begin{equation}\label{eq: C}
  	C := \{U : [0,T] \to \mathbb{R}^{k \times d} \text{ continuous with }\sup_{t\in[0,T]} \|U_t\|_{\mathrm{F}} \leq \sqrt{3k} \text{ for all } t, (U_t)_t \in \overline{B_{\frac{\eta}{2}}(U_0)} \text{ and } U(0) = U_0 \}.
  \end{equation}
   and then
   the map 
   \begin{equation}
   	\begin{aligned}
   		F : & \ C \longrightarrow C \\
   		& \  U \longmapsto F(U)
   	\end{aligned}
   \end{equation} 
   defined as: $F(U) = F_2(U, F_1(U))$ for all $U \in C$, where
   \begin{equation}\label{eq: F}
   	\begin{aligned}
   		U  \rightarrow F_1(U)=:& \tilde{Y} \text{ is the solution of \eqref{eq:DLR-eq-U} having fixed } U \text{ on the r.h.s., namely } \\
   		 \mathrm{d}\tilde{Y}_{t}&  =U_{t}a(t,U_{t}^{\top}\tilde{Y}_{t})\,\mathrm{d}t+U_{t}b(t,U_{t}^{\top}\tilde{Y}_{t})\mathrm{d}W_{t},\ \tilde{Y}(0) = Y_0,  \text{ with } U \text{ fixed}; \\
   		 \text{ and } (U,\tilde{Y})  \rightarrow F_2(U, \tilde{Y}) =: &\tilde{U} \text{ is the solution of \eqref{eq:DLR-eq-Y} having fixed }  \tilde{Y} \text{ and partially fixed } U \text{ on the r.h.s. namely}\\
   		  \frac{\mathrm{d}{\tilde{U}}_{t}}{\mathrm{d}t} =&{C}_{\tilde{Y}_{t}}^{-1}\mathbb{E}[\tilde{Y}_{t}a(t,U_{t}^{\top}\tilde{Y}_{t})^{\top}](I_{d\times d}-P_{\tilde{U}_{t}}^{\mathrm{row}}), \ \tilde{U}(0) = U_0, \text{ with } \tilde{Y} \text{ fixed}. \\
   	\end{aligned}
   \end{equation}
		
  Notice that if $(U,Y) \in\mathbb{D}_{\mathrm{det}}^{\mathrm{loc}} \times\mathbb{D}_{\mathrm{sto}}^{\mathrm{loc}}$, then the pair $(\tilde{U},\tilde{Y})$ belongs again to $\mathbb{D}_{\mathrm{det}}^{\mathrm{loc}} \times\mathbb{D}_{\mathrm{sto}}^{\mathrm{loc}}$ by Assumptions~\ref{ass:initial value} and \ref{ass:linear-growth-bound} (for more details see \cite[Section 3]{kazashi2025dynamical}). This implies that the respective Gram matrices of $\tilde{U}$ and $\tilde{Y}$ are invertible up to $T$, hence equations in \eqref{eq: F} are well-defined. Moreover, $\tilde{U}$ remains orthogonal at all times in $[0,T]$, thus, we can write $P_{\tilde{U}_t}^{\mathrm{row}} = \tilde{U}^{\top}_t\tilde{U}_t$. If $F$ admits a fixed point $(U,Y) \in\mathbb{D}_{\mathrm{det}}^{\mathrm{loc}} \times\mathbb{D}_{\mathrm{sto}}^{\mathrm{loc}}$, then $(U,Y)$ is a strong solution of \eqref{eq:DLR-eq-U} and \eqref{eq:DLR-eq-Y} in the time interval $[0,T]$. 
   
   From this perspective, the content of this section is centered on proving the existence of the fixed point of $F$.

\begin{Remark}[Independence of \eqref{eq:T loc} from Lipschitz assumption]

	A central difficulty in proving the existence result of the DO solution is the presence of the inverse Gramian ${C}_{\tilde{Y}_{t}}^{-1}$, which makes the analysis highly non-standard. 
	The quantity \eqref{eq:T loc} provides a time interval in which we can control the invertibility of ${C}_{\tilde{Y}_{t}}$.
 	It is important to observe that expression \eqref{eq:T loc} depends on the initial data $U_0$ and $Y_0$, as well as on the linear-growth bound of the drift and diffusion. However, it does not depend on their Lipschtzianity property. This insight implies that the same result still holds independently of Assumption \ref{ass: local-lipschitzianity} and opens up to possible generalizations of Assumption \ref{ass: local-lipschitzianity}.
 \end{Remark}
    
Before going into the details of proving existence, we need a result giving a bound on the norm of the stochastic and deterministic bases.
		\begin{Lemma}[Bound on the DLRA]\label{lem: DLR Gronwall - lg}
		Suppose that Assumptions \ref{ass:initial value} and \ref{ass:linear-growth-bound} hold.
		Consider equations \ref{eq:DLR-eq-U}-\eqref{eq:DLR-eq-Y} and a positive time $T>0$ defined as in \eqref{eq:T loc} for which the pair $(U,Y) \in\mathbb{D}_{\mathrm{det}}^{\mathrm{loc}} \times\mathbb{D}_{\mathrm{sto}}^{\mathrm{loc}}$ satisfies \eqref{eq:DLR-eq-U}--\eqref{eq:DLR-eq-Y} in $[0,T]$ given the initial value $(U_0,Y_0)$ and $\mathbb{E}[|Y_0|^{2+\varepsilon}] < + \infty$. Then, there exists a positive constant $C_{T,\varepsilon}>0$ such that we have for all $t \in [0,T]$:
			\begin{equation}\label{lg-stability and U norm}
				\| U_t\|_{\mathrm{F}} = \sqrt{k}\text{ for all } t\in [0,T] \quad \mbox{and} \quad \mathbb{E}\left[\sup_{t\in[0,T]}|Y_{t}|^{2+\varepsilon}\right]
				\leq C_{T,\varepsilon} \left( 1+\mathbb{E}[|Y_0|^{2+\varepsilon}]\right).
			\end{equation}
		\begin{proof}
 By equation \eqref{eq:DLR-eq-U}, $U_t$ is absolutely continuous on $[0,T]$ and thus a.e.\ differentiable. As $U$ solves \eqref{eq:DLR-eq-U}, we have
				\begin{equation*}
					\dot{U}_t U_t^{\top}  = 0, %C^{-1}_{Y_t} \mathbb{E}\left[Y_t a(t,U_{t}^{\top}Y_t)^{\top}\right]\left(I_{d \times d} - P^{\text{row} }_{U_t} \right)U^{\top}_t =  C^{-1}_{Y_t} \mathbb{E}\left[Y_t a(t,U_{t}^{\top}Y_t)^{\top}\right](U^{\top}_t - U^{\top}_t) = 0,
				\end{equation*}
				for a.e.\ $t \in [0,T]$, as $P_{U_t}^{\mathrm{row}}U_t = U_t$, which implies $U_t U_t^{\top} =I_{k \times k}$ for all $t \in [0,T]$ since $U_0U_0^{\top}= I_{k \times k}$.
				Hence $\|U_t\|_{\mathrm{F}} =\sqrt{ \operatorname{trace} (U_tU_t^{\top})} = \sqrt{k}$. 
				
			On the other hand, let us define the stopping time
		\begin{equation*}
			\tau_n(\omega):=\inf\left\{ t\geq 0 :\ |Y_t(\omega)|\geq n\right\}\wedge T, \quad \omega \in \Omega,
		\end{equation*}
		for $n \in \mathbb{N}$. For the sake of notation, denote $p:=2+\varepsilon$.
        Then, for $u \in [0,T]$ one has that 
        \begin{equation*}
			\begin{aligned}
				\mathbb{E}[ \sup_{t\in[0,u]} |Y_{t\wedge\tau_n}|^{p}]  & = \mathbb{E}\left[ \sup_{t\in[0,u]} |Y_{0} + \int_{0}^{t\wedge\tau_n} U_s a(s,X_s) \mathrm{d}s + \int_{0}^{t\wedge\tau_n} U_s b(s,X_s) \mathrm{d}W_s  |^{p}\right] \\
                \leq & 3^{p-1} \mathbb{E}[|Y_{0}  |^{p}] + 3^{p-1} \mathbb{E}\left[ \sup_{t\in[0,u]} \left| \int_{0}^{t\wedge\tau_n} U_s a(s,X_s) \mathrm{d}s \right|^{p}\right] \\
                &+ 3^{p-1} \mathbb{E}\left[ \sup_{t\in[0,u]} \left| \int_{0}^{t\wedge\tau_n} U_s b(s,X_s) \mathrm{d}W_s  \right|^{p}\right] \\
                \leq & 3^{p-1} \mathbb{E}[|Y_{0}  |^{p}] + 3^{p-1} T^{p-1} \mathbb{E}\left[ \sup_{t\in[0,u]} \int_{0}^{t} \left| \mathbbm{1}_{\{s < \tau_n\}} U_{s} a(s,X_{s}) \right|^{p} \mathrm{d}s \right] \\
                &+ 3^{p-1} \mathbb{E}\left[ \sup_{t\in[0,u]} \left| \int_{0}^{t} \mathbbm{1}_{\{s < \tau_n\}} U_{s} b(s,X_{s})  \mathrm{d}W_s  \right|^{p}\right] \\
                \leq &3^{p-1} \mathbb{E}[|Y_{0}  |^{p}] + 3^{p-1} T^{p-1} (3k)^{\frac{p}{2}} \mathbb{E}\left[ \sup_{t\in[0,u]} \int_{0}^{t} \left| \mathbbm{1}_{\{s < \tau_n\}} a(s,X_{s}) \right|^{p} \mathrm{d}s \right] \\
                &+ 3^{p-1} \mathbb{E}\left[ \sup_{t\in[0,u]} \left| \int_{0}^{t} \mathbbm{1}_{\{s < \tau_n\}} U_{s} b(s,X_{s})  \mathrm{d}W_s  \right|^{p}\right] \\
                 \leq & 3^{p-1} \mathbb{E}[|Y_{0}  |^{p}] + 6^{p-1} T^{p-1} (3k)^{\frac{p}{2}} \mathbb{E}\left[  \int_{0}^{u} C_{\mathrm{lgb}}^{p} \left( 1 + \left|X_{s \wedge \tau_n}\right|^{p} \right) \mathrm{d}s \right] \\
                 &+ 3^{p-1} \mathbb{E}\left[ \sup_{t\in[0,u]} \left| \int_{0}^{t} \mathbbm{1}_{\{s < \tau_n\}} U_{s} b(s,X_{s})  \mathrm{d}W_s  \right|^{p}\right] \\
			\end{aligned}
		\end{equation*}
        Then, via the norm bound on $U$, the Burkholder-Davis-Gundy inequality, and Hölder inequality, there exists a positive constant $C_p$ such that
        \begin{equation*}
            \begin{aligned}
                \mathbb{E}\left[ \sup_{t\in[0,u]} \left| \int_{0}^{t} \mathbbm{1}_{\{s < \tau_n\}} U_{s} b(s,X_{s}) \mathrm{d}W_s  \right|^{p}\right] \leq &  C_p  \mathbb{E}\left[ \left( \int_{0}^{u} \left|  \mathbbm{1}_{\{s < \tau_n\}} U_{s} b(s,X_{s}) \right|^2 \mathrm{d}s \right)^{\frac{p}{2}}  \right] \\
                \leq &  C_p (3k)^{\frac{p}{2}} T^{\frac{p}{2}-1}\mathbb{E}\left[ \int_{0}^{u} \left|\mathbbm{1}_{\{s < \tau_n\}} b(s,X_{s}) \right|^{p} \mathrm{d}s \right] \\
                 \leq &  C_p (3k)^{\frac{p}{2}}  2^{p-1} T^{\frac{p}{2}-1} \mathbb{E}\left[ \int_{0}^{u} C_{\mathrm{lgb}}^{p} \left( 1 + \left|X_{s \wedge \tau_n}\right|^{p} \right) \mathrm{d}s \right] \\
                 \leq & C_p (3k)^{\frac{p}{2}} 2^{p-1} T^{\frac{p}{2}-1} \int_{0}^{u} C_{\mathrm{lgb}}^{p} \left( 1 + \mathbb{E}\left[ \left|X_{s \wedge \tau_n}\right|^{p}\right] \right) \mathrm{d}s  \\
            \end{aligned}
        \end{equation*}
        Using the linear-growth bound of the coefficients, we get
         \begin{equation*}
			\begin{aligned}
				\mathbb{E}[ \sup_{t\in[0,u]} |Y_{t\wedge\tau_n}|^{p}]  
                & \leq 3^{p-1} \mathbb{E}[|Y_{0}  |^{p}] + 6^{p-1}(3k)^{\frac{p}{2}} \left( T^{p-1}  + C_p T^{\frac{p}{2}-1}\right) \int_{0}^{u} C_{\mathrm{lgb}}^{p} \left( 1 + \mathbb{E}\left[\left|X_{s \wedge \tau_n}\right|^{p} \right) \right] \mathrm{d}s \\
                & \leq 3^{p-1} \mathbb{E}[|Y_{0}  |^{p}] + 6^{p-1}(3k)^{\frac{p}{2}} \left( T^{p-1}  + C_pT^{\frac{p}{2}-1} \right) C_{\mathrm{lgb}}^{p} \int_{0}^{u} \left( 1 + \mathbb{E}\left[ \sup_{r\in[0,s]} \left|X_{r \wedge \tau_n}\right|^{p} \right] \right) \mathrm{d}s \\
                & \leq 3^{p-1} \mathbb{E}[|Y_{0}  |^{p}] + 6^{p-1}(3k)^{p} \left( T^{p-1}  + C_p T^{\frac{p}{2}-1}\right) C_{\mathrm{lgb}}^{p} \int_{0}^{u} \left( 1 + \mathbb{E}\left[ \sup_{r\in[0,s]} \left| Y_{r \wedge \tau_n}\right|^{p} \right] \right) \mathrm{d}s \\
			\end{aligned}
		\end{equation*}
		
		By Grönwall's lemma, it holds that
        \begin{equation*}
			\begin{aligned}
				\mathbb{E}\left[\sup_{t\in[0,u]}|Y_{t\wedge\tau_n}|^{2+\varepsilon}\right]
				&\leq C_{T,\varepsilon, u} \left( 1+\mathbb{E}[|Y_0|^{2+\varepsilon}]\right),
			\end{aligned}
		\end{equation*}
        and, hence, for $u=T$, it follows
		\begin{equation*}
			\begin{aligned}
				\mathbb{E}\left[\sup_{t\in[0,T]}|Y_{t\wedge\tau_n}|^{2+\varepsilon}\right]
				&\leq C_{T,\varepsilon} \left( 1+\mathbb{E}[|Y_0|^{2+\varepsilon}]\right),
			\end{aligned}
		\end{equation*}
		where \(C_{T,\varepsilon, u}>0\) and \(C_{T,\varepsilon}>0\) do not depend on \(n\). As  \(C_{T,\varepsilon}>0\) is independent of $n$, letting \(n\to\infty\) and using Fatou's lemma yield the thesis
		\begin{equation*}
			\begin{aligned}
				\mathbb{E}\left[\sup_{t\in[0,T]}|Y_t|^{2+\varepsilon}\right]&\leq C_{T,\varepsilon} \left(1+\mathbb{E}[|Y_0|^{2+\varepsilon}]\right).
			\end{aligned}
		\end{equation*}
		\end{proof}
	\end{Lemma}
	Similarly to the proof of Lemma \ref{lem: DLR Gronwall - lg}, one can prove that there exists a positive constant $K(T)$ such
	\begin{equation}
	\mathbb{E}[|Y_t|^2] \leq \big(\mathbb{E}[|Y_0|^2] + C_{\mathrm{lgb}}T \big)e^{(C_{\mathrm{lgb}}+1)T}=:K(T).
	\end{equation}

    Finally, we will now state a result of local existence. The proof of this statement comes through several steps that we separate it in various Lemmata and Propositions for the sake of clarity. First, we will demonstrate the existence of a DO solution via a fixed point argument based on Schauder Theorem \cite[Theorem 8.8]{deimling2013nonlinear}. Schauder Theorem says that, given a subset $C$
    of a real Banach space $\mathcal{C}$, which is nonempty, closed, bounded and convex, a map $F: C \to C$ that is compact, i.e.\ a continuous map whose image of subsets of $C$ is relatively compact, admits a fixed point, namely $F(x)=x$ for $x \in C$. Our definition of $C \subseteq \mathcal{C}:=C( [0,T];\mathbb{R}^{k \times d})$ in \eqref{eq: C} and $F$ in \eqref{eq: F} are suitable to seek the local existence of a strong DO solution $(U,Y)$ in the specified time interval $[0,T]$, with $T$ defined as in \eqref{eq:T loc}. We recall that $U_0$ is chosen with orthonormal rows.
    
    \begin{Lemma}[Properties of the set $C$]\label{lem: C}
    	The set $C$ defined in \eqref{eq: C}
    	is a subset of $\mathcal{C}=C( [0,T];\mathbb{R}^{k \times d})$, and is nonempty, closed, bounded, and convex.
    	\begin{proof}
       Trivially, $C \subset \mathcal{C}$, and the function $(U_t)_t$ such that $U_t = U_0$ for all $t\geq 0$ belongs to $C$ by definition, implying that $C$ is nonempty, too. The subset $C$ is also closed due to the $sup$ norm, and bounded, as by construction one has that $\sup_{t} \|U_t\|_{\mathrm{F}} \leq k$ for all $U \in C$. Furthermore, if $U, V \in C$, for any $\lambda \in [0,1]$ one has that
    	$\lambda U_t + (1-\lambda) V_t$ is continuous in $[0,T]$ and
    	$$\|\lambda U_t + (1-\lambda) V_t\|_{\mathrm{F}} \leq |\lambda| \|U_t\|_{\mathrm{F}} + |1-\lambda| \|V_t\|_{\mathrm{F}} \leq |\lambda| \sqrt{3k} + |1-\lambda| \sqrt{3k} = \sqrt{3k}.$$ 
	 Similarly, $\overline{B_{\frac{\eta}{2}}(U_0)}$ is closed under convex combination. Therefore $C$ is convex.
    	    \end{proof}
    \end{Lemma}
    Notice that by choosing $C$ as defined in \eqref{eq: C} we are not asking for the orthogonality of the elements $U \in C$ as in this case $C$ would not be convex. We will retrieve orthogonality of the function $U$ after having proved the existence of a fixed point for our suitable $F$, which will be proven to be well-defined now.
    
    \begin{Proposition}[Well-definiteness of the map $F$]\label{prop: F}
   The map $F :  \ C \longrightarrow C$, defined as in \eqref{eq: F}, is well-defined.
   \begin{proof}
   The goal of the proof is to prove that for each element $x\in C$, we have $F(x) \in C$. We recall that the relation \eqref{eq: F} defines the map $F$ as follows: for a given $U$ we obtain the solution $\tilde{Y}$ of the SDE \eqref{eq:DLR-eq-Y}, which is now a standard Itô's SDE as $U$ is fixed; then we use $U,\tilde{Y}$ to find a function $\tilde{U}$ which solves the ODE defined by 
    \begin{equation}\label{eq:int U}
   	\frac{d{\tilde{U}}_{t}}{\mathrm{d}t} ={C}_{\tilde{Y}_{t}}^{-1}\mathbb{E}[\tilde{Y}_{t}a(t,U_{t}^{\top}\tilde{Y}_{t})^{\top}](I_{d\times d}-P_{\tilde{U}_{t}}^{\mathrm{row}}).
   \end{equation}
   Notice that in \eqref{eq:int U} the input $U$ is used in the drift term, instead of the sought solution $\tilde{U}$. Moreover, notice that, $T$ being chosen as in \eqref{eq:T loc}, 
   ${C}_{\tilde{Y}_{t}}$ is always invertible by Proposition \ref{prop: A inverse}, and $P_{\tilde{U}_{t}}^{\mathrm{row}}$ is always of full-rank $k$ in $[0,T]$ due to \cite[Proposition A.1]{kazashi2025dynamical} . We aim to prove that the solution $\tilde{Y}$ is actually unique, as well as $\tilde{U}$, because in this case for each $U$ there exists only a $\tilde{U}$ such that $F(U) = \tilde{U}$ and, hence, $F$ is a proper map.
   
   First of all, given a fixed $U \in C$ in \eqref{eq:DLR-eq-Y}, via Assumption \ref{ass: local-lipschitzianity} for all $t \geq 0$ and for all $y,z \in \mathbb{R}^k$ such that $|y|,|z| \leq n$ one has that 
   \begin{equation*}
   	| a(t,U_t^{\top}y) - a(t,U_t^{\top}z) | + \| b(t,U_t^{\top}y) - b(t,U_t^{\top}z) \|_{\mathrm{F}} \leq L_{\sqrt{3k}n} | U_t^{\top}y - U_t^{\top}z | \leq L_{\sqrt{3k}n}  \sqrt{3k} | y - z |,
   \end{equation*}
   and using Assumption \ref{ass:linear-growth-bound} it holds
   \begin{equation}
   	|a(t,U_t^{\top}y)|^{2}+\|b(t,U_t^{\top}y)\|_{\mathrm{F}}^{2}\leq C_{\mathrm{lgb}}(1+|U_t^{\top}y|^{2}) \leq 3k C_{\mathrm{lgb}}(1+|y|^{2}), \quad  \forall y \in \mathbb{R}^k.
   \end{equation}
   Then, via 
   \cite[Theorem 3.4]{mao2007stochastic} there exists a unique global solution of \eqref{eq:DLR-eq-Y} on $[0,T]$ which has almost surely continuous paths. Moreover, by applying the same argument as in Lemma~\ref{lem: DLR Gronwall - lg}, $\tilde{Y}$ satisfies a similar estimate (up to different constants) to the one of \eqref{lg-stability and U norm}.
   
   Notice that by Proposition \ref{prop: A inverse}, \cite[Lemma 3.1]{kazashi2025dynamical}, and well-posedness of \eqref{eq:DLR-eq-Y} for fixed $\tilde{U}$, one has $\tilde{Y} \in\mathbb{D}_{\mathrm{sto}}^{\mathrm{loc}}$. Therefore, $\tilde{Y}$ has linearly independent components in $L^2(\Omega)$ (see Proposition \ref{prop: A inverse}). Consider this $\tilde{Y}$ fixed in \eqref{eq:int U}, and define $G: [0,T] \times \mathbb{R}^{k \times d} \to \mathbb{R}^{k \times d}$ as
   \begin{equation}\label{eq: U inter}
   	 G(t, \tilde{U}_t):={C}_{\tilde{Y}_{t}}^{-1}\mathbb{E}[\tilde{Y}_{t}a(t,U_{t}^{\top}\tilde{Y}_{t})^{\top}](I_{d\times d}-P_{\tilde{U}_{t}}^{\mathrm{row}}),
   \end{equation}
   with $\tilde{U}(0) = U_0$ and $U_0$ orthonormal.
   First of all, one has
   	\begin{equation}\label{eq: orth U}
   		\tilde{U}_t  \frac{\mathrm{d}{\tilde{U}}_{t}}{\mathrm{d}t}^{\top} = \tilde{U}_t G(t,\tilde{U}_t)^{\top} = \tilde{U}_t (I_{d \times d}- P_{\tilde{U}_t}^{\mathrm{row}}) \mathbb{E}[a(t, U_t^{\top} \tilde{Y}_{t}) \tilde{Y}_t^{\top}] C_{\tilde{Y}_t}^{-1} = 0
   \end{equation}
   and, hence, 
      \begin{equation}
   	\begin{aligned}\label{eq: interm orth}
   	\tilde{U}_t  \tilde{U}_t^{\top} &= \int_0^t \mathrm{d} (\tilde{U}_s  \tilde{U}_s^{\top} )+ \tilde{U}_0  \tilde{U}_0^{\top} \\
   	&=  \int_0^t \frac{\mathrm{d}\tilde{U}_{s}}{\mathrm{d}s}  \tilde{U}_s^{\top} +  \tilde{U}_s \frac{\mathrm{d}\tilde{U}^{\top}_{s}}{\mathrm{d}s} + U_0 U_0^{\top} = I_{k \times k}, \quad \forall t \in [0,T],
\end{aligned}
\end{equation}
   via relation \eqref{eq: orth U}. Therefore, if $\tilde{U}_t$ exists, it has orthonormal rows for all $t$ and, hence, it is bounded as $\|\tilde{U}_{t}\|_{\mathrm{F}} \leq \sqrt{k}$.
   The right-hand side in \eqref{eq: U inter} $G$ is locally Lipschitz; indeed by Cauchy-Schwarz inequality, linear-growth bound, and \cite[Lemma 3.5]{kazashi2021existence} one has
   \begin{equation*}
   \begin{aligned}
   	\|  G(t, \tilde{U}_t)-  G(t,\tilde{V}_t) \|_{\mathrm{F}} =& \| {C}_{\tilde{Y}_{t}}^{-1}\mathbb{E}[\tilde{Y}_{t}a(t,U_{t}^{\top}\tilde{Y}_{t})^{\top}](I_{d\times d}-P_{\tilde{U}_{t}}^{\mathrm{row}})  \\
   	&- {C}_{\tilde{Y}_{t}}^{-1}\mathbb{E}[\tilde{Y}_{t}a(t,U_{t}^{\top}\tilde{Y}_{t})^{\top}](I_{d\times d}-P_{\tilde{V}_{t}}^{\mathrm{row}}) \|_{\mathrm{F}} \\
   	= &\| {C}_{\tilde{Y}_{t}}^{-1}\mathbb{E}[\tilde{Y}_{t}a(t,U_{t}^{\top}\tilde{Y}_{t})^{\top}](P_{\tilde{V}_{t}}^{\mathrm{row}} -P_{\tilde{U}_{t}}^{\mathrm{row}})  \|_{\mathrm{F}} \\
   	 \leq & 2\gamma  \sqrt{K(T)} \sqrt{C_{\mathrm{lgb}}(1 + \sqrt{3k} K(T))}   2	\| \tilde{U}_t- \tilde{V}_t \|_{\mathrm{F}} \\
   	  \leq & 4\gamma  \sqrt{K(T)} \sqrt{C_{\mathrm{lgb}}(1 + \sqrt{3k} K(T))}   \sup_{0 \leq t \leq T}	\| \tilde{U}_t- \tilde{V}_t \|_{\mathrm{F}},
   	    \end{aligned}
   \end{equation*}
   where we recall that $\gamma :=\|{C}_{Y_0}^{-1}\|_{\mathrm{F}}$ and we notice that $4\gamma  \sqrt{K(T)} \sqrt{C_{\mathrm{lgb}}(1 + \sqrt{3k} K(T))}$ depends on $\sqrt{3k}$, hence the radius of the ball $B_{\sqrt{3k}}(U_0)$.
   Therefore, via \cite[Chapter II.1-6]{walter2013ordinary} there exists a unique local solution for \eqref{eq: U inter} up to time $T'$. Assume that $T' < T$, then one has that $\lim\limits_{t \to T'} \| \tilde{U}_t \|_{\mathrm{F}} = \infty$. But this is a contradiction, as we have $\| \tilde{U}_t\|_{\mathrm{F}} \leq \sqrt{k}$ for all $t \in [0, T']$ by \eqref{eq: interm orth}. By restarting and using the same argument, we can extend well-posedness of $\tilde{U}$ up to time $T$. 
   
   Being the solution of an ODE, $\tilde{U}$ is absolutely continuous in $[0,T]$ and, via \eqref{eq: interm orth}, has orthonormal rows and, hence, $\|\tilde{U}_t\|_{\mathrm{F}} = \sqrt{k}$ due to assumption on $T$. Furthermore, by construction of $T$, one has that $\tilde{U} \in \overline{B_{\frac{\eta}{2}}(U_0)}$.
   Therefore, $\tilde{U} \in C$. Furthermore, by construction, for each $U$, there exists a unique $\tilde{U}$ such that $F(U)=\tilde{U}$. Therefore, $F(C) \subseteq C$ and $F: C \to C$ is a well-defined operator.
\end{proof}
    \end{Proposition}
    Now, we want to prove that $F(C)$ is relatively compact in $\mathcal{C}=C( [0,T];\mathbb{R}^{k \times d})$, i.e.\ $F: C \to C$ is a compact map. First, we check the continuity of $F$.
    
    \begin{Proposition}[Continuity of $F$]\label{prop: F cont}
    The map $F :  \ C \longrightarrow C$, defined as in \eqref{eq: F}, is a continuous map.
    \begin{proof}
     Let us consider a sequence $U^n \in C$ which converges to $U\in\mathcal{C}$. Then, from Lemma \ref{lem: C}  we have $U\in C$. 
	 Moreover, let us denote by $\tilde{Y}^n,\tilde{Y}$ the solutions of the SDE \eqref{eq:DLR-eq-Y} with frozen $U^n,U$, respectively, and let us denote $\tilde{U}^n=F(U^n)$ and $\tilde{U}=F(U)$, with $\tilde{U}^n, \tilde{U} \in C$ by Proposition \ref{prop: F}. Our aim is to prove that 
    $$ \lim\limits_{n \to \infty}\sup_{t\in[0,T]}\|\tilde{U}^n_t-\tilde{U}_t\|_{\mathrm{F}}=0.$$
    We do this in two steps. We first prove that $\tilde{Y}_n = F_1(U^n)$ converges to $\tilde{Y}$ in some sense. Then, we show that $\tilde{U}^n = F_2(U^n,\tilde{Y}^n)$ converges to $U$, where $F_1, F_2$ are defined in \eqref{eq: F}.  In our context, it is enough to prove that $\tilde{Y}^n$ converges to $\tilde{Y}$ uniformly in $[0,T]$ in $L^2(\Omega)$, as we will explain in detail later on.

    First, fix $R>0$ and define the stopping time
    \begin{equation}\label{eq:tau_R}
    	\tau_{R,n}:=\inf\{t\ge0:\ \sup_{s\in [0,t]}|\tilde{Y}^n_s|\vee\sup_{s\in [0,t]}|\tilde{Y}_s|\ge R\}\wedge T.
    \end{equation}
    with the convention that $\inf \varnothing = T$. By Assumption \ref{ass: local-lipschitzianity}, on the ball $B_R:=\{y \in \mathbb{R}^k : |y| \leq R\}$ there exists a constant $L_R>0$ such that for all $t\in[0,T]$, for all $U \in C$, and for all $y,z\in B_R$,
    $$	|a(t,U^{\top}y)-a(t,U^{\top}z)|+\|b(t,U^{\top}y)-b(t,U^{\top}z)\|_{\mathrm{F}}\leq \sqrt{3k} L_{\sqrt{3k}R}|y-z|.$$
    	
    	Define the stopped processes $\tilde{Y}^{n,\tau_{R,n}}_t:=\tilde{Y}^n_{t\wedge\tau_{R,n}}$, $\tilde{Y}^{\tau_{R,n}}_t:=\tilde{Y}_{t\wedge\tau_{R,n}}$ and set
    	$\Delta_t:=\tilde{Y}^{n,\tau_{R,n}}_t-\tilde{Y}^{\tau_{R,n}}_t$.
    	For $t\in[0,T]$, one has that
    	\begin{equation*}
    	\begin{aligned}
    		\Delta_t
    		=& \int_0^t \big(U^n_s a(s,(U^n_s)^\top\tilde{Y}^n_s)-U_s a(s,U_s^\top\tilde{Y}_s)\big)  \mathbbm{1}_{\{s\le\tau_{R,n}\}} \,\mathrm{d}s \\
    		&+ \int_0^t \big(U^n_s b(s,(U^n_s)^\top\tilde{Y}^n_s)-U_s b(s,U_s^\top\tilde{Y}_s)\big)\,\mathbbm{1}_{\{s\le\tau_{R,n}\}}\mathrm{d}W_s.
    	\end{aligned}
    	\end{equation*}
    	Via adding and subtracting crossed terms one gets
    	\begin{equation}\label{eq: Delta_t}
    		\begin{aligned}
    			\Delta_t
    			=& \int_0^t \left(U^n_s \left(a(s,(U^n_s)^\top\tilde{Y}^n_s)-a(s,(U^n_s)^\top\tilde{Y}_s)\right)
    			+ \big(U^n_s-U_s\big)a(s,U_s^\top\tilde{Y}_s)  \right) \mathbbm{1}_{\{s\le\tau_{R,n}\}} \,\mathrm{d}s \\
    			&+ \int_0^t  U^n_s\left(a(s,(U^n_s)^\top\tilde{Y}_s)-a(s,U_s^\top\tilde{Y}_s)\right) \mathbbm{1}_{\{s\le\tau_{R,n}\}} \,\mathrm{d}s \\
    			& +\int_0^t \left( U^n_s \left(b(s,(U^n_s)^\top\tilde{Y}^n_s)-b(s,(U^n_s)^\top\tilde{Y}_s)\right)
    			+ \left(U^n_s-U_s\right)b(s,U_s^\top\tilde{Y}_s)  \right) \mathbbm{1}_{\{s\le\tau_{R,n}\}} \,\mathrm{d}W_s. \\
    			&+ \int_0^t  U^n_s\left(b(s,(U^n_s)^\top\tilde{Y}_s)-b(s,U_s^\top\tilde{Y}_s)\right) \mathbbm{1}_{\{s\le\tau_{R,n}\}} \,\mathrm{d}W_s. \\
    		\end{aligned}
    	\end{equation}
    	By boundedness of $U^n$ and the Lipschitz bound on $B_R$, one gets
    	\begin{equation*}
    		\big|U^n_s\big(a(s,(U^n_s)^\top\tilde{Y}^n_s)-a(s,(U^n_s)^\top\tilde{Y}_s)\big)\mathbbm{1}_{\{s\le\tau_{R,n}\}}\big|
    	\le L_{\sqrt{3k}R} \sqrt{3k} |(U^n_s)^\top(\tilde{Y}^n_s-\tilde{Y}_s)\mathbbm{1}_{\{s\le\tau_{R,n}\}}| \le L_{\sqrt{3k}R} 3k|\Delta_s|.
    	\end{equation*}
    Moreover, using linear growth, for all $\omega \in \{s\le\tau_{R,n}\}$, it holds 
    	\begin{equation*}
    	\big|\big(U^n_s-U_s\big)a(s,U_s^\top\tilde{Y}_s)\mathbbm{1}_{\{s\le\tau_{R,n}\}}\big|
    	\le \|U^n_s-U_s\|_{\mathrm{F}}\,|a(s,U_s^\top\tilde{Y}_s)\mathbbm{1}_{\{s\le\tau_{R,n}\}}|
    	\le  \|U^n_s-U_s\|_{\mathrm{F}} \,C_{\mathrm{Lgb}}(1+\sqrt{3k}R).
    		\end{equation*}
    and similarly we have 
        \begin{equation*}
    	\big|U^n_s\big(a(s,(U^n_s)^\top\tilde{Y}_s)-a(s,U_s^\top\tilde{Y}_s)\big)\mathbbm{1}_{\{s\le\tau_{R,n}\}}\big|
    	\le L_{\sqrt{3k}R} \sqrt{3k}\big|(U^n_s-U_s)^\top\tilde{Y}_s\mathbbm{1}_{\{s\le\tau_{R,n}\}}\big|
    	\le L_{\sqrt{3k}R} \sqrt{3k}\|U^n_s-U_s\|_{\mathrm{F}} R.
    	\end{equation*}
     By combining these three estimates we obtain for $s\le\tau_{R,n}$
        \begin{equation*}
    	\big|\big(U^n_s a(s,(U^n_s)^\top\tilde{Y}^n_s)-U_s a(s,U_s^\top\tilde{Y}_s)\big)\mathbbm{1}_{\{s\le\tau_{R,n}\}}\big|
    	\le L_{\sqrt{3k}R} 3k |\Delta_s| + \left( C_{\mathrm{Lgb}} (1+\sqrt{3k}R)+ L_{\sqrt{3k}R} R \sqrt{3k}  \right)  \|U^n_s-U_s\|_{\mathrm{F}}.
    	\end{equation*}
    	Similar relations hold for the diffusion coefficient with the same constants. Therefore, via taking the supremum over time and the expectation in \eqref{eq: Delta_t}, by using Jensen's inequality, the Doob's martingale inequality and Itô's isometry, one obtains
      \begin{equation}\label{eq: Delta_t 2}
    	\begin{aligned}
    		&\mathbb{E}\Big[\sup_{t \in [0,T]}|\Delta_t|^2\Big] \\
    		\le & 2\mathbb{E}\Big[\sup_{t \in [0,T]} \Big(\int_0^{t}\big(L_{\sqrt{3k}R} 3k|\Delta_s| + \left( C_{\mathrm{Lgb}} (1+\sqrt{3k}R)+ L_{\sqrt{3k}R} R \right) \sqrt{3k} \|U^n_s-U_s\|_{\mathrm{F}}\big) \mathbbm{1}_{\{s\le\tau_{R,n}\}}\,\mathrm{d}s\Big)^2\Big] \\
    		&+ 2\,\mathbb{E}\Big[\sup_{t \in [0,T]}\Big|\int_0^t \left( U^n_s \left(b(s,(U^n_s)^\top\tilde{Y}^n_s)-b(s,(U^n_s)^\top\tilde{Y}_s)\right)
    		+ \left(U^n_s-U_s\right)b(s,U_s^\top\tilde{Y}_s)  \right) \mathbbm{1}_{\{s\le\tau_{R,n}\}} \,\\
    		& \qquad \qquad \qquad \quad + U^n_s\left(b(s,(U^n_s)^\top\tilde{Y}_s)-b(s,U_s^\top\tilde{Y}_s)\right) \mathbbm{1}_{\{s\le\tau_{R,n}\}} \,\mathrm{d}W_s\Big|^2\Big] \\
    		\le & 4T \mathbb{E}\Big[\int_0^T \bigg(L_{\sqrt{3k}R}^2 9k^2|\Delta_s|^2 + \left( C_{\mathrm{Lgb}} (1+\sqrt{3k}R)+ L_{\sqrt{3k}R} R \right)^2 3k \|U^n_s-U_s\|_{\mathrm{F}}^2\bigg) \mathbbm{1}_{\{s\le\tau_{R,n}\}}\mathrm{d}s\Big] \\
    		&+ 16\,\mathbb{E}\Big[\int_0^T \bigg(L_{\sqrt{3k}R}^2 9k^2|\Delta_s|^2 + \left( C_{\mathrm{Lgb}} (1+\sqrt{3k}R)+ L_{\sqrt{3k}R} R \right)^2 3k\|U^n_s-U_s\|_{\mathrm{F}}^2\bigg) \mathbbm{1}_{\{s\le\tau_{R,n}\}}\mathrm{d}s\Big]\\
    		\le& (4T+16) \left[ L_{\sqrt{3k}R}^2 9k^2 \int_0^T \mathbb{E}\big[ |\Delta_s|^2\big]\,\mathrm{d}s + \left( C_{\mathrm{Lgb}} (1+\sqrt{3k}R)+ L_{\sqrt{3k}R} R \right)^2 3k \int_0^T  \|U^n_s-U_s\|_{\mathrm{F}}^2 \mathrm{d}s\right] \\
    		\le& (4T+16) \left[ L_{\sqrt{3k}R}^2 9k^2 \int_0^T \mathbb{E}\big[ \sup_{r \in [0,s]} |\Delta_r|^2\big]\,\mathrm{d}s + \left( C_{\mathrm{Lgb}} (1+\sqrt{3k}R)+ L_{\sqrt{3k}R} R \right)^2 3k T  \sup_{t \in [0,T]} \|U^n_t-U_t\|_{\mathrm{F}}^2\right].
    	\end{aligned}
    	\end{equation}

    	By Gronwall's inequality, from \eqref{eq: Delta_t 2} there exists a constant $K_R>0$ dependent on $R$ such that
    	\begin{equation}\label{eq: sup Y_tau_n}
    	\mathbb{E}\Big[\sup_{t \in [0,T]}|\tilde{Y}^{n,\tau_{R,n}}_t-\tilde{Y}^{\tau_{R,n}}_t|^2\Big]
    	\le K_R \sup_{t \in [0,T]} \|U^n_t-U_t\|_{\mathrm{F}}^2. 
    	\end{equation}
    	Very similarly to the proof of Lemma \ref{lem: DLR Gronwall - lg}, under Assumptions \ref{ass:initial value} and \ref{ass:linear-growth-bound} one can prove that there exists a positive constant $\tilde{K}(T) < \infty$ such that
    	$\sup\limits_n \mathbb{E}[\sup\limits_{t \in [0,T]}|\tilde{Y}^n_t|^2]\le \tilde{K}(T)$ and $\mathbb{E}[\sup\limits_{t \in [0,T]}|\tilde{Y}_t|^2]\le \tilde{K}(T)$. 
    	Then, Chebyshev's inequality implies that 
    	\begin{equation*}
    		\begin{aligned}
    	\sup_n \mathbb{P}(\tau_{R,n}<T) &= \sup_n \mathbb{P}\left( \left(\inf\{t\ge0:\ \sup_{s\in [0,t]}|\tilde{Y}^n_s|\vee\sup_{s\in [0,t]}|\tilde{Y}_s|\ge R\}\wedge T \right) <T\right) \\
    	& \leq \sup_n \mathbb{P}(\inf\{t\ge0:\ \sup_{s\in [0,t]}|\tilde{Y}^n_s|\ge R\} <T) +  \mathbb{P}(\inf\{t\ge0:\ \sup_{s\in [0,t]}|\tilde{Y}_s|\ge R\} <T) \\
    	& \leq\frac{\sup\limits_n \mathbb{E}[\sup\limits_{t \in [0,T]}|\tilde{Y}^n_t|^2] + \mathbb{E}[\sup\limits_{t \in [0,T]}|\tilde{Y}_t|^2]}{R^2}
    	\le \frac{2\tilde{K}(T)}{R^2}.
    	\end{aligned}
    	\end{equation*}
        Notice that the final right-hand side is independent of $n$.
    	Now fix an arbitrary $\overline{\varepsilon}>0$. Choose $R>0$ so large that
    	\begin{equation*}
    	\frac{2\tilde{K}(T)}{R^2} < \frac{\overline{\varepsilon}}{2}.
    	\end{equation*}
    	With this $R$ fixed, \eqref{eq: sup Y_tau_n} and Chebyshev's inequality yield
    	\begin{equation*}
    	\mathbb{P}\Big(\sup_{t \in [0,T]}|\tilde{Y}^{n,\tau_{R,n}}_t-\tilde{Y}^{\tau_{R,n}}_t|>\delta\Big)
    	\le \frac{K_R}{\delta^2} \sup_{t \in [0,T]} \|U^n_t-U_t\|_{\mathrm{F}}^2.
    		\end{equation*}
    	Now choose $\delta=\overline{\varepsilon}$ and by hypothesis of convergence of $U^n$ to $U$, then one can choose $N$ such that for all $n\ge N$,
    	\begin{equation*}
    	\frac{K_R}{\overline{\varepsilon}^2} \sup_{t \in [0,T]} \|U^n_t-U_t\|_{\mathrm{F}}^2 < \frac{\overline{\varepsilon}}{2}.
    	\end{equation*}
    	Using all the above relations, for $n\ge N$ one obtains
    	\begin{equation}\label{eq: Y_n prop}
    	\begin{aligned}
    		\mathbb{P}\Big(\sup_{t \in [0,T]}|\tilde{Y}^n_t-\tilde{Y}_t|>\overline{\varepsilon}\Big)
    		&\le \mathbb{P}(\tau_{R,n}<T) + \mathbb{P}\Big(\sup_{t \in [0,T]}|\tilde{Y}^{n,\tau_{R,n}}_t-\tilde{Y}^{\tau_{R,n}}_t|>\overline{\varepsilon}\Big) \\
    		&< \frac{\overline{\varepsilon}}{2} + \frac{\overline{\varepsilon}}{2} =\overline{\varepsilon},
    	\end{aligned}
    		\end{equation}
	hence $\tilde{Y}^n$ converges to $\tilde{Y}$ uniformly on $[0,T]$ in probability.
    	
        Now we want to exploit the uniform convergence in probability to prove the convergence of the expectation-type terms in \eqref{eq:DLR-eq-U}. One can prove for $\tilde{Y}$ a similar result to Lemma \ref{lem: DLR Gronwall - lg}, i.e.\
        $\mathbb{E}[\sup\limits_{t \in [0,T]}|\tilde{Y}_t|^{2+\varepsilon}]\le C_{T,\varepsilon} \left( 1+\mathbb{E}[|Y_0|^{2+\varepsilon}]\right)$ and  $\mathbb{E}[\sup\limits_{t \in [0,T]}|\tilde{Y}^n_t|^{2+\varepsilon}]\le C_{T,\varepsilon} \left( 1+\mathbb{E}[|Y_0|^{2+\varepsilon}]\right)$, with $ C_{T,\varepsilon}$ independent of $n$.
		By using this uniform $2+\varepsilon$ moment bound for $\tilde{Y}^n_t$ and $\tilde{Y}_t$, for all $K>0$ one has that
		\begin{equation*}
			\mathbb{E}[\sup_{t \in [0,T]}|\tilde{Y}^n_t-\tilde{Y}_t|^2 \mathbbm{1}_{\{\sup_{t \in [0,T]}|\tilde{Y}^n_t-\tilde{Y}_t|^2 > K\}} ] \leq \mathbb{E}[\sup_{t \in [0,T]}|\tilde{Y}^n_t-\tilde{Y}_t|^{2+ \varepsilon}  \frac{1}{K^{1+\frac{\varepsilon}{2} }} ]  \leq \frac{2 C_{T,\varepsilon} \left( 1+\mathbb{E}[|Y_0|^{2+\varepsilon}]\right)}{K^{1+\frac{\varepsilon}{2}}},
		\end{equation*}
		where the right-hand side of the previous relation is independent of $n$. This relation implies the uniform integrability in $n$ of $\sup_{t \in [0,T]} |\tilde{Y}^n_t - \tilde{Y}_t|^2$.
		Then, Vitali's Theorem \cite[Theorem 16.6]{schilling2017measures} yields that the quantity $\sup_{t\in[0,T]}\mathbb{E}[|\tilde{Y}^n_t - \tilde{Y}_t|^2]$ goes to $0$ for $n$ going to $+\infty$, which implies the convergence of $\tilde{Y}^n$ to $\tilde{Y}$ in $L^2(\Omega)$ uniformly in $[0,T]$. 
		
		Then, denoting $C_{\tilde{Y}^n_t}=\mathbb{E}[\tilde{Y}^n_t(\tilde{Y}^n_t)^{\top}]$ and  $C_{\tilde{Y}_t}=\mathbb{E}[\tilde{Y}_t\tilde{Y}_t^{\top}]$, by Cauchy-Schwarz inequality one has that
    	\begin{equation*}
    		\sup_{t\in[0,T]}\|C_{\tilde{Y}^n_t}-C_{\tilde{Y}_t}\|_{\mathrm{F}} \leq 2 K(T) \sqrt{ \sup_{t\in[0,T]} \mathbb{E}[|\tilde{Y}^n_t-\tilde{Y}_t|^2]},
    	\end{equation*}
    	and, hence, by the convergence of $\tilde{Y}^n$ to $\tilde{Y}$ in $L^2(\Omega)$ uniformly in $[0,T]$ we obtain
    	\begin{equation}\label{eq: gramian convergence}
    		\lim\limits_{n \to \infty}\sup_{t\in[0,T]}\|C_{\tilde{Y}^n_t}-C_{\tilde{Y}_t}\|_{\mathrm{F}} =0.
    	\end{equation}
    	Moreover, using Assumption \ref{ass: local-lipschitzianity} and the usual truncation argument, we can get similarly that 
    	\begin{equation}\label{eq: drift term convergence}
    	\lim\limits_{n\to \infty} \sup_{t\in[0,T]}\Big\|\mathbb{E}[\tilde{Y}^n_t a(t,(U^n_t)^\top\tilde{Y}^n_t)^\top] - \mathbb{E}[\tilde{Y}_t a(t,U_t^\top\tilde{Y}_t)^\top]\Big\|_{\mathrm{F}}=0.
    	\end{equation}

    As $U^n$ and $U$ belong to $C$, then we have $(U^n,\tilde{Y}^n) \in\mathbb{D}_{\mathrm{det}}^{\mathrm{loc}}\times\mathbb{D}_{\mathrm{sto}}^{\mathrm{loc}}$ and $(U,\tilde{Y}) \in\mathbb{D}_{\mathrm{det}}^{\mathrm{loc}}\times\mathbb{D}_{\mathrm{sto}}^{\mathrm{loc}}$ by Assumptions \ref{ass:initial value} and \ref{ass:linear-growth-bound} (i.e.\ see \cite[Section 3]{kazashi2025dynamical}).
	    Then, Proposition \ref{prop: A inverse} yields that for all $t\in[0,T]$ and for all $n$, $C_{\tilde{Y}^n_t}$ and $C_{\tilde{Y}_t}$ are invertible and
    	their inverses are uniformly bounded:
    \begin{equation}\label{eq: inverse bound}
    	\sup_{t\in[0,T]}\|C_{\tilde{Y}^n_t}^{-1}\|_{\mathrm{F}} \le 2\gamma ,\qquad \sup_{t\in[0,T]}\|C_{\tilde{Y}_t}^{-1}\|_{\mathrm{F}}  \le 2\gamma.
    		\end{equation}
    	Thanks to \eqref{eq: inverse bound} matrix inversion is a continuous map on the set $\{M \in \mathbb{R}^{k \times k} :\ \|M^{-1}\|_{\mathrm{F}}  \le 2\gamma \}$. Indeed, given $A,B \in \mathbb{R}^{k \times k}$ invertible matrices, then
    	\begin{equation}\label{eq: A-B -1}
    		A^{-1}-B^{-1}= B^{-1}\left(B-A\right)A^{-1}.
    	\end{equation} 
    	Therefore, for all $n$ and for all $t \in [0,T]$ one has
    	\begin{equation*}
    	\|C_{\tilde{Y}^n_t}^{-1}-C_{\tilde{Y}_t}^{-1}\|_{\mathrm{F}} \leq  \|C_{\tilde{Y}_t}^{-1}\|_{\mathrm{F}} 	\|C_{\tilde{Y}_t}-C_{\tilde{Y}^n_t}\|_{\mathrm{F}}  \|C_{\tilde{Y}^n_t}^{-1}\|_{\mathrm{F}} \leq 4\gamma^2 	\|C_{\tilde{Y}_t}-C_{\tilde{Y}^n_t}\|_{\mathrm{F}}
    	\end{equation*}
    	which, together with \eqref{eq: gramian convergence}, implies that
    	\begin{equation}\label{eq: inverse convergence}
    	\lim\limits_{n \to \infty}\sup_{t\in[0,T]}\|C_{\tilde{Y}^n_t}^{-1}-C_{\tilde{Y}_t}^{-1}\|_{\mathrm{F}}=0.
    		\end{equation}
    	
    	Now we use the previous relation to prove convergence of $F(U^n)$ to $F(U)$, whose fact yields the continuity of $F$. For the sake of notation, let us define
    \begin{equation*}
    	A^n(t):=C_{\tilde{Y}^n_t}^{-1}\,\mathbb{E}\big[\tilde{Y}^n_t a\big(t,(U^n_t)^\top\tilde{Y}^n_t\big)^\top\big],\qquad A(t):=C_{\tilde{Y}_t}^{-1}\,\mathbb{E}\big[\tilde{Y}_t a(t,U_t^\top\tilde{Y}_t)^\top\big].
    		\end{equation*}
    	We can bound the difference $A^n(t)-A(t)$ in Frobenius norm as
    \begin{equation*}
    	\begin{aligned}
    		\|A^n(t)-A(t)\|_{\mathrm{F}}
    		&\le \|C_{\tilde{Y}^n_t}^{-1}-C_{\tilde{Y}_t}^{-1}\|_{\mathrm{F}} \cdot \Big\|\mathbb{E}\big[\tilde{Y}^n_t a(t,(U^n_t)^\top\tilde{Y}^n_t)^\top\big]\Big\|_{\mathrm{F}} \\
    		&\quad + \|C_{\tilde{Y}_t}^{-1}\|_{\mathrm{F}} \cdot \Big\|\mathbb{E}\big[\tilde{Y}^n_t a(t,(U^n_t)^\top\tilde{Y}^n_t)^\top\big]
    		- \mathbb{E}\big[\tilde{Y}_t a(t,U_t^\top\tilde{Y}_t)^\top\big]\Big\|_{\mathrm{F}}.
    	\end{aligned}
    	\end{equation*}
    	Using the uniform second-moment bound for the stochastic basis, the boundedness of the deterministic one, Assumption \ref{ass:linear-growth-bound}, assumptions on the Gramian, and Cauchy-Schwarz inequality, one has
    	\begin{equation*}
    		\begin{aligned}
    	\sup_{t\in[0,T]}\|A^n(t)-A(t)\|_{\mathrm{F}} 
		\le  &\sqrt{ K(T) C_{\mathrm{Lgb}} (1 + \sqrt{3k} K(T)) }\sup_{t\in[0,T]}\|C_{\tilde{Y}^n_t}^{-1}-C_{\tilde{Y}_t}^{-1}\|_{\mathrm{F}} \\
    	&+ 2\gamma \sup_{t\in[0,T]} \Big\|\mathbb{E}[\tilde{Y}^n_t a(t,(U^n_t)^\top\tilde{Y}^n_t )^\top]-\mathbb{E}[\tilde{Y}_t a(t ,U_t^\top\tilde{Y}_t)^\top]\Big\|_{\mathrm{F}}.
    	\end{aligned}
    	\end{equation*}
    	 Consequently, via \eqref{eq: drift term convergence} and \eqref{eq: inverse convergence} one has
    \begin{equation}\label{eq: A conv}
    	\lim\limits_{n \to \infty} \sup_{t\in[0,T]}\|A^n(t)-A(t)\|_{\mathrm{F}}=0
    		\end{equation}
    	
    Now, $U_0 U_0^{\top} = I_{k \times k}$ and $\| (U_0 U_0^{\top})^{-1} \|_{\mathrm{F}} = \sqrt{k}$. Then, thanks to Proposition \ref{prop: A inverse} and \eqref{eq: A-B -1}, one can write 
    \begin{equation*}
    	\begin{aligned}
    	\| P_{\tilde{U}_t}^{\mathrm{row}}-P_{\tilde{U}^n_t}^{\mathrm{row}} \|_{\mathrm{F}} \leq & \| \tilde{U}_t^{\top} (\tilde{U}_t\tilde{U}_t^{\top})^{-1} \tilde{U}_t - (\tilde{U}_t^n)^{\top} (\tilde{U}_t^n (\tilde{U}_t^n)^{\top})^{-1} \tilde{U}_t^n  \|_{\mathrm{F}} \\
    	\leq & \| \tilde{U}_t^{\top} (\tilde{U}_t\tilde{U}_t^{\top})^{-1} \tilde{U}_t - (\tilde{U}_t^n)^{\top} (\tilde{U}_t\tilde{U}_t^{\top})^{-1} \tilde{U}_t +(\tilde{U}_t^n)^{\top} (\tilde{U}_t\tilde{U}_t^{\top})^{-1} \tilde{U}_t - (\tilde{U}_t^n)^{\top} (\tilde{U}_t^n(\tilde{U}_t^n)^{\top})^{-1} \tilde{U}_t \\
    	&+ (\tilde{U}_t^n)^{\top} (\tilde{U}_t^n(\tilde{U}_t^n)^{\top})^{-1} \tilde{U}_t - (\tilde{U}_t^n)^{\top} (\tilde{U}_t^n (\tilde{U}_t^n)^{\top})^{-1} \tilde{U}_t^n  \|_{\mathrm{F}} \\
    	\leq & (2 \sqrt{k} \sqrt{3k}  ) \|\tilde{U}_t^n-\tilde{U}_t\|_{\mathrm{F}} + 3k  \|(\tilde{U}_t\tilde{U}_t^{\top})^{-1} - (\tilde{U}_t^n(\tilde{U}_t^n)^{\top})^{-1}\|_{\mathrm{F}} +   (2 \sqrt{k} \sqrt{3k}  ) \|\tilde{U}_t^n-\tilde{U}_t\|_{\mathrm{F}} \\
    	\leq & (4 \sqrt{3}k +12k^2)\|\tilde{U}_t^n-\tilde{U}_t\|_{\mathrm{F}}
    \end{aligned}
    \end{equation*}
    
    Finally, define the difference $E_t:=\tilde{U}^n_t-\tilde{U}_t$. Then $E_t$ satisfies the following ODE
    	\begin{equation*}
    	\dot{E}_t = (A^n(t)-A(t))(I-P_{\tilde{U}^n_t}^{\mathrm{row}}) + A(t)\big(P_{\tilde{U}_t}^{\mathrm{row}}-P_{\tilde{U}^n_t}^{\mathrm{row}}\big),
    		\end{equation*}
    with $E_0=0$.
    Passing to the norm and to the integral over time one gets 
    \begin{equation*}
    	\|E_t\|_{\mathrm{F}} \le \int_0^t \|A^n(s)-A(s)\|_{\mathrm{F}} \,\mathrm{d}s + C(T) \int_0^t \|E_s\|_{\mathrm{F}} \,\mathrm{d}s,
    		\end{equation*}
    		where $C(T):= (4 \sqrt{3}k +12k^2)  \sqrt{ K(T) C_{\mathrm{Lgb}} (1 + \sqrt{3k} K(T)) }$. Thus, Gronwall's inequality yields
    	\begin{equation*}
    	\sup_{t\in[0,T]}\|E_t\|_{\mathrm{F}}
    	\le \Big(\int_0^T \|A^n(s)-A(s)\|_{\mathrm{F}}\,\mathrm{d}s\Big)\exp(C(T) T),
    	\end{equation*}
    	and, hence, 
    		\begin{equation*}
    		\sup_{t\in[0,T]}\|E_t\|_{\mathrm{F}}
    		\le T\Big(\sup_{t\in[0,T]} \|A^n(t)-A(t)\|_{\mathrm{F}}\Big)\exp(C(T) T),
    	\end{equation*}
    	Using \eqref{eq: A conv} we get 
    \begin{equation*}
    \lim\limits_{n \to \infty}	\sup_{t\in[0,T]}\|\tilde{U}^n_t-\tilde{U}_t\|_{\mathrm{F}} = 0,
    		\end{equation*}
    i.e.\ the continuity of the map $F$.
    \end{proof}
    \end{Proposition}
    
   Via Ascoli-Arzelà Theorem (see e.g.\ \cite[Chapter III Theorem 3.1]{lang2012real}), one can finally prove the compactness of $F$.
  \begin{Proposition}[Compactness of $F$]\label{prop: compact F}
  	The map $F :  \ C \longrightarrow C$, defined as in \eqref{eq: F}, is compact.
  	\begin{proof}
  	    We already know that $F$ is continuous from Proposition \ref{prop: F cont}.  We only need to prove the relative compactness of $F(C)$ and we want to show that the hypotheses of Ascoli-Arzelà Theorem (see e.g.\ \cite[Chapter III Theorem 3.1]{lang2012real}) are satisfied. One can see that $F (C)$ is equi-bounded: indeed, for all $\tilde{U} \in F(C)$ (and respectively $\tilde{Y}$) by properties of matrix norm and Cauchy-Schwarz inequality one has
  		\begin{align*}
  			\sup_{0\leq t \leq T}  \|\tilde{U}_t\|_{\mathrm{F}} &\leq \|U_0\|_{\mathrm{F}} + \sup_{0\leq t \leq T}  \left\|\int_0^t C_{\tilde{Y}_s}^{-1} \mathbb{E}[\tilde{Y}_s a(s, U_s^{\top} \tilde{Y}_s)^{\top}](I_{d \times d}-P_{\tilde{U}_{s}}^{\mathrm{row}} ) \mathrm{d}s\right\|_{\mathrm{F}} \\
  			&\leq \|U_0\|_{\mathrm{F}} + \sup_{0\leq t \leq T}  \int_0^t \left\| C_{\tilde{Y}_s}^{-1} \mathbb{E}[\tilde{Y}_s a(s, U_s^{\top} \tilde{Y}_s)^{\top}](I_{d \times d}-P_{\tilde{U}_{s}}^{\mathrm{row}} ) \right\|_{\mathrm{F}} \mathrm{d}s \\
  			&\leq \|U_0\|_{\mathrm{F}} + \int_0^T \sqrt{2\gamma} \|C_{\tilde{Y}_s}^{-\frac{1}{2}}\mathbb{E}[Y_s a(s,U_s^{\top} \tilde{Y}_s)^{\top}]\|_{\mathrm{F}} |I_{d \times d}- P_{\tilde{U}_{s}}^{\mathrm{row}}| \mathrm{d}s \\
  			&\leq \|U_0\|_{\mathrm{F}} + \int_0^T \sqrt{2\gamma k} \|\mathbb{E}[ a(s,U_s^{\top} \tilde{Y}_s)]\|_{\mathrm{F}} |I_{d \times d}- P_{\tilde{U}_{s}}^{\mathrm{row}}| \mathrm{d}s \\
  			&\leq \|U_0\|_{\mathrm{F}} + \int_0^T \sqrt{2\gamma k}  \sqrt{C_{\mathrm{lgb}}(1 + 3k K(T))} \mathrm{d}s \\
  			&\leq \sqrt{k} + T \cdot \sqrt{2\gamma k} \sqrt{C_{\mathrm{lgb}}(1 + 3k K(T))},
  		\end{align*}
  		where the right-hand side in the last line is independent of $\tilde{U}$.
		
  		Furthermore, let us prove that the set $F(C)$ is equi-continuous, i.e.\ $\forall t \in [0,T],$
  		$\forall \varepsilon > 0$, $\exists \delta:=\delta(s)>0$ : $\forall s$ : $|s| < \delta \Rightarrow \|\tilde{U}(t+s) - \tilde{U}(t)\|_{\mathrm{F}} < \varepsilon$, $\forall \tilde{U} \in F(C)$. First, by Proposition \ref{prop: F} for all $\tilde{U} \in F(C)$ there exists $U \in C$ and $Y=F_1(U)$ such that $\tilde{U} = F_2(U,Y)$, i.e.\ 
        \begin{equation*}
            \frac{\mathrm{d}\tilde{U}_t}{\mathrm{d}t} = C_{Y_t}^{-1} \mathbb{E}\left[Y_t a(t,U_t^{\top}Y_t)^{\top}\right] \left(I_{d \times d} - P_{\tilde{U}_t}^{\mathrm{row}}\right).
        \end{equation*}
        Then, similarly to the previous computations, one has 
  		\begin{equation}\label{eq: equi}
  			\begin{aligned}
  				\sup_{t \in [0,T]} 	\|\tilde{U}_{t+s} -\tilde{U}_{t}\|_{\mathrm{F}}
  				& \leq \sup_{t \in [0,T]}  \left\|\int_t^{t+s} C_{Y_r}^{-1} \mathbb{E}[Y_r a(r, U_r^{\top} Y_r)^{\top}](I_{d \times d}- P_{\tilde{U}_{r}}^{\mathrm{row}}) dr\right\|_{\mathrm{F}} \\
  				& \leq\sup_{t \in [0,T]} \int_t^{t+s} \|C_{Y_r}^{-1}\mathbb{E}[Y_r a(r, U_r^{\top} Y_r)^{\top}]\|_{\mathrm{F}}|(I_{d \times d}- P_{\tilde{U}_{r}}^{\mathrm{row}})| dr \\
  				& \leq\sup_{t \in [0,T]} \int_t^{t+s} \|C_{Y_r}^{-\frac{1}{2}}\|_{\mathrm{F}} \|\mathbb{E}[a(r, U_r^{\top} Y_r)]\|_{\mathrm{F}} |(I_{d \times d}- P_{\tilde{U}_{r}}^{\mathrm{row}})| dr \\
  				& \leq \sup_{t \in [0,T]}  \int_t^{t+s}  \sqrt{2\gamma} \sqrt{C_{\mathrm{lgb}}(1 +3 k K(T))} dr\\
  				&\leq  s\sqrt{2\gamma} \sqrt{C_{\mathrm{lgb}}(1 + 3k K(T))} .
  			\end{aligned}
  		\end{equation}
  		The final upper-bound in \eqref{eq: equi} is independent of the chosen $\tilde{U}$.
  		Therefore, via defining $\delta = \frac{\varepsilon}{ \sqrt{2\gamma} \sqrt{C_{\mathrm{lgb}}(1 + 3k K(T))} }$, which is even independent of $s$, we get that $F$ is equicontinuous. 
  		Thus, $F(C)$ is relatively compact, i.e.\ the map $F$ is compact. 
  	\end{proof}
  \end{Proposition}
  
  We can finally state our result of local existence.
	\begin{Theorem}\label{thm: well-posedness locl-linear-growth}
		Let Assumptions \ref{ass:initial value}, \ref{ass: local-lipschitzianity}, and \ref{ass:linear-growth-bound} hold. Moreover, assume that the initial conditions $(U_0,Y_0 )$ are such that $U_0  \in \mathbb{R}^{k\times d}$ is a matrix with orthonormal rows and that the components of \(Y_0\) are linearly independent in \(L^2(\Omega)\) with $\rho^2:=\|Y_0\|_{[L^{2}(\Omega)]^{k}}^{2}$ and $\gamma :=\|C_{Y_0}^{-1}\|_{\mathrm{F}}$. Then there exists a strong DO solution $(U,Y)$ of \eqref{eq:SDE-int} in $[0,T]$ with $T$ defined in \eqref{eq:T loc}.
		\begin{proof}
			 Consider $C$ defined as in \eqref{eq: C} and the map $F:C \to C$ defined as in \eqref{eq: F}. By Proposition \ref{prop: F} we know that $F$ is well defined and via Proposition \ref{prop: compact F} that $F$ is also a compact map. Via Schauder Theorem \cite[Theorem 8.8]{deimling2013nonlinear} there exists a fixed point $U \in C$ such that $F(U) = U$, which has orthonormal rows by construction of \eqref{eq: F} (as seen in relation \eqref{eq: interm orth}), and a unique $Y$ such that \eqref{eq:DLR-eq-Y} is satisfied for this fixed $U$. Then, by construction $(U,Y)$ is a strong DO solution of \eqref{eq:SDE-int}.
        \end{proof}
    \end{Theorem}

    \begin{Remark}[Uniqueness with higher moments and sharp Lipschitz constant]
    If Assumption \ref{ass: local-lipschitzianity} were replaced by a stronger condition giving a precise growth of $L_n$ with respect to the norm of the state, one would be able to establish uniqueness of the DO solution up to boundedness of the $L^p$-norm of the initial point. Indeed, the equation \eqref{eq:DLR-eq-U} involves the presence of two expectations and when comparing two different DO solutions, via adding and subtracting suitable crossed terms, one needs to control the tails of the distribution of the stochastic basis. 
    In order to conclude the argument by Gronwall's Lemma, then it would be necessary to assume logarithmic growth for $L_n$ in Assumption \ref{ass: local-lipschitzianity} (see Proposition \ref{prop: uniqueness loc} for more details), which is restrictive for practical purposes.
    \end{Remark}

\begin{Assumption}[Logarithmic growth of $L_n$]\label{ass: log growth L_n}
	We assume that there exists a positive constant $c_{\sqrt{\mathrm{log}}}$ such that
	\begin{equation}
		L_n \leq c_{\sqrt{\mathrm{log}}}\sqrt{\log n}, \qquad n\geq2.
	\end{equation}
\end{Assumption}

Under the further Assumption \ref{ass: log growth L_n}, the strong DO solution is unique.

\begin{Proposition}[Pathwise uniqueness]\label{prop: uniqueness loc}
	Suppose the hypothesis of Theorem \ref{thm: well-posedness locl-linear-growth} is valid. Furthermore, suppose Assumption \ref{ass: log growth L_n}. Then the pair $(U,Y)$, strong solution of \eqref{eq:SDE-int} in $[0,T]$ with $T$ defined in \eqref{eq:T loc}, is pathwise unique.
	
	\begin{proof}
		Consider two strong DO solutions $(U^1,Y^1)$ and $(U^2,Y^2)$ satisfying \eqref{eq:DLR-eq-U}--\eqref{eq:DLR-eq-Y} in the same interval $[0,T]$, with the same Brownian motion $W_t$, and the same initial data $(U_0,Y_0)$. Define the differences $\Delta U_t:=U^1_t-U^2_t$ and $\Delta Y_t:=Y^1_t-Y^2_t$, and the following functional
		\begin{equation}\label{eq:D_t}
			D_t:=\|\Delta U_t\|_{\mathrm{F}}^2+|\Delta Y_t|^2.
		\end{equation}
		We want to show that \eqref{eq:D_t} is null in expectation, which would imply that $\Delta U_t$ and $\Delta Y_t$ are a.s.\ zero. Therefore, $Y^2$ would be a modification of $Y^1$ and, since a strong DO solution has a.s.\ continuous paths, $Y^1$ and $Y^2$ would be indistinguishable. To prove this property, we will evaluate the difference in $L^2$ between the two distinguished DO solution and conclude via Osgood's lemma \cite{bihari1956generalization}.
		
		First, notice that $U^1_t,U^2_t$ satisfy the following ODEs
		\begin{equation*}
			\begin{aligned}
				\frac{\mathrm{d}U^1_t}{\mathrm{d}t}&=C_{Y^1_t}^{-1}\mathbb{E}\!\left[Y^1_ta\!\left(t,(U^1_t)^\top Y^1_t\right)^\top\right](I_{d\times d}-P_{U^1_t}^{\mathrm{row}})=:f(t,U^1_t,Y^1_t),\\
				\frac{\mathrm{d}U^2_t}{\mathrm{d}t}&=C_{Y^2_t}^{-1}\mathbb{E}\!\left[Y^2_ta\!\left(t,(U^2_t)^\top Y^2_t\right)^\top\right](I_{d\times d}-P_{U^2_t}^{\mathrm{row}})=:f(t,U^2_t,Y^2_t),
			\end{aligned}
		\end{equation*}
		and, hence,
		\begin{equation}\label{eq: Delta U_t}
			\frac{\mathrm{d}}{\mathrm{d}t}\Delta U_t=f(t,U^1_t,Y^1_t)-f(t,U^2_t,Y^2_t).
		\end{equation}
		On the other hand, the stochastic bases $Y^1_t,Y^2_t$ satisfy, respectively,
		\begin{equation*}
			\begin{aligned}
			\mathrm{d}Y^1_t=&U^1_ta\!\left(t,(U^1_t)^\top Y^1_t\right)\mathrm{d}t+U^1_tb\!\left(t,(U^1_t)^\top Y^1_t\right)\mathrm{d}W_t,\\ 
			\mathrm{d}Y^2_t=&U^2_ta\!\left(t,(U^2_t)^\top Y^2_t\right)\mathrm{d}t+U^2_tb\!\left(t,(U^2_t)^\top Y^2_t\right)\mathrm{d}W_t,
			\end{aligned}
		\end{equation*}
		so their difference satisfies
		\begin{equation}\label{eq: Delta Y_t}
			\mathrm{d}\Delta Y_t=\left[U^1_ta\!\left(t,(U^1_t)^\top Y^1_t\right)-U^2_ta\!\left(t,(U^2_t)^\top Y^2_t\right)\right]\mathrm{d}t+\left[U^1_tb\!\left(t,(U^1_t)^\top Y^1_t\right)-U^2_tb\!\left(t,(U^2_t)^\top Y^2_t\right)\right]\mathrm{d}W_t.
		\end{equation}
		Via It\^o's formula, the functional $D_t$ satisfies
		\begin{equation}\label{eq: Ito D uniqueness}
			\begin{aligned}
				\mathrm{d}D_t=&2\left\langle\Delta U_t,f(t,U^1_t,Y^1_t)-f(t,U^2_t,Y^2_t)\right\rangle_{\mathrm{F}}\mathrm{d}t+2\left\langle\Delta Y_t,U^1_ta\!\left(t,(U^1_t)^\top Y^1_t\right)-U^2_ta\!\left(t,(U^2_t)^\top Y^2_t\right)\right\rangle\mathrm{d}t\\
				+&\left\|U^1_tb\!\left(t,(U^1_t)^\top Y^1_t\right)-U^2_tb\!\left(t,(U^2_t)^\top Y^2_t\right)\right\|_{\mathrm{F}}^2\mathrm{d}t+\mathrm{d}M_t,
			\end{aligned}
		\end{equation}
		where
		\begin{equation*}
			\mathrm{d}M_t:=2\left\langle\Delta Y_t,\left[U^1_tb\!\left(t,(U^1_t)^\top Y^1_t\right)-U^2_tb\!\left(t,(U^2_t)^\top Y^2_t\right)\right]\mathrm{d}W_t\right\rangle
		\end{equation*}
		is a local martingale with $M_0 = 0$.
		
		We first estimate intermediate terms that will be useful in evaluating $\mathbb{E}[D_t^2]$. Since $U^1_t$ and $U^2_t$ have orthonormal rows, one has 
		\begin{equation}\label{eq:X difference uniqueness}
			|(U^1_t)^\top Y^1_t-(U^2_t)^\top Y^2_t|\leq|(U^1_t)^\top(Y^1_t-Y^2_t)|+|((U^1_t)^\top-(U^2_t)^\top)Y^2_t|\leq|\Delta Y_t|+\|\Delta U_t\|_{\mathrm{F}}|Y^2_t|.
		\end{equation}
		From Lemma \ref{lem: DLR Gronwall - lg}, there exists a positive constant $K_{2+\varepsilon}(T)$ such that
		\begin{equation}\label{eq: moment 2eps uniqueness}
			\sup_{t\in[0,T]}\mathbb{E}[|Y^1_t|^{2+\varepsilon}]+\sup_{t\in[0,T]}\mathbb{E}[|Y^2_t|^{2+\varepsilon}]\leq 2 K_{2+\varepsilon}(T).
		\end{equation}
	
		Now, fix $n\geq2$ and define the event $\Omega_{n,t}:=\{|Y^1_t|\wedge |Y^2_t|\leq n\}$, where the drift and the diffusion have Lipschitz constant equal to $L_n$. This estimate will be useful when comparing element inside the expectation of the vector field of $\Delta U_t$ Then, 
		\begin{equation}\label{eq: local difference a b uniqueness}
			\begin{aligned}
				&\mathbb{E}\!\left[|a(t,(U^1_t)^\top Y^1_t)-a(t,(U^2_t)^\top Y^2_t)|^2\mathbbm{1}_{\Omega_{n,t}}\right]+\mathbb{E}\!\left[\|b(t,(U^1_t)^\top Y^1_t)-b(t,(U^2_t)^\top Y^2_t)\|_{\mathrm{F}}^2\mathbbm{1}_{\Omega_{n,t}}\right]\\
				\leq& 2L_n^2\mathbb{E}\!\left[|(U^1_t)^\top Y^1_t-(U^2_t)^\top Y^2_t|^2\mathbbm{1}_{\Omega_{n,t}}\right]\\
				\leq&4L_n^2\left(\mathbb{E}[|\Delta Y_t|^2]+\|\Delta U_t\|_{\mathrm{F}}^2\mathbb{E}[|Y^2_t|^2]\right)\leq C_TL_n^2\mathbb{E}[D_t].
			\end{aligned}
		\end{equation}
		for a positive constant $C_T$ independent of $n$. From now on, with a slight abuse of notation, $C_T$ will denote a positive constant independent of $n$.
		
		We next give an upper bound estimate for the same quantities in the complement event $\Omega_{n,t}^c=\{|Y^1_t|\wedge |Y^2_t|>n\} \subseteq \{|Y^1_t|\vee |Y^2_t|>n\} $. From Chebyshev's inequality and \eqref{eq: moment 2eps uniqueness}, one has
		\begin{equation}\label{eq: tail probability uniqueness}
			\mathbb{P}(\Omega_{n,t}^c)\leq\mathbb{P}(|Y^1_t|>n)+\mathbb{P}(|Y^2_t|>n)\leq\frac{C_T}{n^{2+\varepsilon}},
		\end{equation}
		and, hence, H\"older's inequality gives, for $i,j\in\{1,2\}$,
		\begin{equation}\label{eq: tail second moment uniqueness}
			\mathbb{E}\!\left[|Y^i_t|^2\mathbbm{1}_{\{|Y^j_t|>n\}}\right]\leq\mathbb{E}[|Y^i_t|^{2+\varepsilon}]^{\frac{2}{2+\varepsilon}}\mathbb{P}(|Y^j_t|>n)^{\frac{\varepsilon}{2+\varepsilon}}\leq\frac{C_T}{n^\varepsilon},
		\end{equation}
		which implies
		\begin{equation}\label{eq: tail growth uniqueness}
			\mathbb{E}\!\left[\left(1+|Y^1_t|^2+|Y^2_t|^2\right)\mathbbm{1}_{\Omega_{n,t}^c}\right]\leq\frac{C_T}{n^\varepsilon}.
		\end{equation}
		By the linear-growth assumption, one has
		\begin{equation*}
			|a(t,(U^1_t)^\top Y^1_t)-a(t,(U^2_t)^\top Y^2_t)|^2+\|b(t,(U^1_t)^\top Y^1_t)-b(t,(U^2_t)^\top Y^2_t)\|_{\mathrm{F}}^2\leq C_{\mathrm{lgb}}\left(2+|Y^1_t|^2+|Y^2_t|^2\right),
		\end{equation*}
		and therefore \eqref{eq: local difference a b uniqueness} and \eqref{eq: tail growth uniqueness} imply
		\begin{equation}\label{eq: global difference a b uniqueness}
			\mathbb{E}[|a(t,(U^1_t)^\top Y^1_t)-a(t,(U^2_t)^\top Y^2_t)|^2]+\mathbb{E}[\|b(t,(U^1_t)^\top Y^1_t)-b(t,(U^2_t)^\top Y^2_t)\|_{\mathrm{F}}^2]\leq C_TL_n^2\mathbb{E}[D_t]+\frac{C_T}{n^\varepsilon}.
		\end{equation}
		
		We now estimate the difference of the vector fields in the equation for $U$ in \eqref{eq: Delta U_t}. One has
		\begin{equation*}
			\begin{aligned}
				f(t,U^1_t,Y^1_t)-f(t,U^2_t,Y^2_t)=&\left(C_{Y^1_t}^{-1}-C_{Y^2_t}^{-1}\right)\mathbb{E}\!\left[Y^1_ta(t,(U^1_t)^\top Y^1_t)^\top\right](I_{d\times d}-(U^1_t)^\top U^1_t)\\
				&+C_{Y^2_t}^{-1}\mathbb{E}\!\left[(Y^1_t-Y^2_t)a(t,(U^1_t)^\top Y^1_t)^\top\right](I_{d\times d}-(U^1_t)^\top U^1_t)\\
				&+C_{Y^2_t}^{-1}\mathbb{E}\!\left[Y^2_t\left(a(t,(U^1_t)^\top Y^1_t)-a(t,(U^2_t)^\top Y^2_t)\right)^\top\right](I_{d\times d}-(U^1_t)^\top U^1_t)\\
				&+C_{Y^2_t}^{-1}\mathbb{E}\!\left[Y^2_ta(t,(U^2_t)^\top Y^2_t)^\top\right]\left((U^2_t)^\top U^2_t-(U^1_t)^\top U^1_t\right).
			\end{aligned}
		\end{equation*}
		For the first term, using \cite[Lemma 3.5]{kazashi2021existence}, the uniform bound on the inverse covariance matrices, and Cauchy--Schwarz inequality, one has
		\begin{equation*}
			\|C_{Y^1_t}^{-1}-C_{Y^2_t}^{-1}\|_{\mathrm{F}}\leq 2(4\gamma^2) \|C_{Y^1_t}-C_{Y^2_t}\|_{\mathrm{F}}\leq 16\gamma^2 \sqrt{K_2(T)} \mathbb{E}[|\Delta Y_t|^2]^{\frac12},
		\end{equation*}
		where we used
		\begin{equation*}
			C_{Y^1_t}-C_{Y^2_t}=\mathbb{E}\!\left[(Y^1_t-Y^2_t)(Y^1_t)^\top+Y^2_t(Y^1_t-Y^2_t)^\top\right].
		\end{equation*}
		Moreover, by the linear-growth assumption and the second-moment estimate,
		\begin{equation*}
			\begin{aligned}
			\left\|\mathbb{E}[Y^1_ta(t,(U^1_t)^\top Y^1_t)^\top]\right\|_{\mathrm{F}}+\left\|\mathbb{E}[Y^2_ta(t,(U^2_t)^\top Y^2_t)^\top]\right\|_{\mathrm{F}}\leq& 2 \sqrt{K_2(T)} \sqrt{2 C_{\mathrm{lgb}} \left(1+ K_{2}(T)\right) },\\
		  \left\|\mathbb{E}[(Y^1_t-Y^2_t)a(t,(U^1_t)^\top Y^1_t)^\top]\right\|_{\mathrm{F}}\leq & \sqrt{ C_{\mathrm{lgb}} \left(1+ K_{2}(T)\right)}\mathbb{E}[|\Delta Y_t|^2]^{\frac12}.
						\end{aligned}
		\end{equation*}
		The term requiring more care is the one containing the difference of the drift coefficient inside the expectation. By Cauchy--Schwarz inequality and \eqref{eq: global difference a b uniqueness}, one has
		\begin{equation}\label{eq: drift expectation difference uniqueness}
			\begin{aligned}
				&\left\|\mathbb{E}\!\left[Y^2_t\left(a(t,(U^1_t)^\top Y^1_t)-a(t,(U^2_t)^\top Y^2_t)\right)^\top\right]\right\|_{\mathrm{F}}\\
				&\leq\mathbb{E}[|Y^2_t|^2]^{\frac12}\mathbb{E}[|a(t,(U^1_t)^\top Y^1_t)-a(t,(U^2_t)^\top Y^2_t)|^2]^{\frac12}\\
				&\leq\mathbb{E}[|Y^2_t|^2]^{\frac12}\Bigg( \mathbb{E}[|a(t,(U^1_t)^\top Y^1_t)-a(t,(U^2_t)^\top Y^2_t)|^2 \Omega_{n,t}]^{\frac12} 
				&+ \mathbb{E}[|a(t,(U^1_t)^\top Y^1_t)-a(t,(U^2_t)^\top Y^2_t)|^2 \Omega_{n,t}^{c}]^{\frac12}  \Bigg)\\
				&\leq C_TL_n\mathbb{E}[D_t]^{\frac12}+\frac{C_T}{n^{\varepsilon/2}},
			\end{aligned}
		\end{equation}
		for a positive constant $C_T$ independent of $n$. Finally, using
		\begin{equation*}
			\|(U^2_t)^\top U^2_t-(U^1_t)^\top U^1_t\|_{\mathrm{F}}\leq2\|U^1_t-U^2_t\|_{\mathrm{F}},
		\end{equation*}
		which holds by orthogonality of $U^1,U^2$, we obtain
		\begin{equation}\label{eq: f uniqueness final estimate}
			\|f(t,U^1_t,Y^1_t)-f(t,U^2_t,Y^2_t)\|_{\mathrm{F}}\leq C_T(1+L_n)\mathbb{E}[D_t]^{\frac12}+\frac{C_T}{n^{\varepsilon/2}}.
		\end{equation}
		
		We next consider the coefficients in the SDE for $\Delta Y_t$ in \eqref{eq: Delta Y_t}. Denote $A_t:=U^1_ta(t,(U^1_t)^\top Y^1_t)-U^2_ta(t,(U^2_t)^\top Y^2_t)$ and $B_t:=U^1_tb(t,(U^1_t)^\top Y^1_t)-U^2_tb(t,(U^2_t)^\top Y^2_t)$. Using 
		$$A_t=(U^1_t-U^2_t)a(t,(U^1_t)^\top Y^1_t)+U^2_t(a(t,(U^1_t)^\top Y^1_t)-a(t,(U^2_t)^\top Y^2_t)),$$ 
		the linear-growth condition together with \eqref{eq: global difference a b uniqueness} yields
		\begin{equation}\label{eq: A uniqueness estimate}
			\mathbb{E}[|A_t|^2]\leq C_T\|\Delta U_t\|_{\mathrm{F}}^2+C_TL_n^2\mathbb{E}[D_t]+\frac{C_T}{n^\varepsilon}\leq C_T(1+L_n^2)\mathbb{E}[D_t]+\frac{C_T}{n^\varepsilon}.
		\end{equation}
		 Similarly,
		\begin{equation}\label{eq: B uniqueness estimate}
			\mathbb{E}[\|B_t\|_{\mathrm{F}}^2]\leq C_T(1+L_n^2)\mathbb{E}[D_t]+\frac{C_T}{n^\varepsilon}.
		\end{equation}
		
		Finally, we can now return to \eqref{eq: Ito D uniqueness}, with the goal of giving an precise upper bound of $D_t$ in $L^2$ and of applying Osgood's lemma. Since $M_t$ is only a local martingale, let $(\delta_m)_{m\in\mathbb{N}}$ be a localizing sequence of stopping times with values in $[0,T]$ such that $M_{t\wedge\delta_m}$ is a martingale. Considering \eqref{eq: Ito D uniqueness} up to $t\wedge\delta_m$, taking expectations, and using $\mathbb{E}[M_{t\wedge\delta_m}]=0$, one obtains
		\begin{equation}\label{eq: expectation D stopped uniqueness}
			\begin{aligned}
				\mathbb{E}[D_{t\wedge\delta_m}] \leq{}&2\int_0^t\mathbb{E}\!\left[\mathbbm{1}_{\{s\leq\delta_m\}}\|\Delta U_s\|_{\mathrm{F}}\|f(s,U^1_s,Y^1_s)-f(s,U^2_s,Y^2_s)\|_{\mathrm{F}}\right]\mathrm{d}s\\
				&+2\int_0^t\mathbb{E}\!\left[\mathbbm{1}_{\{s\leq\delta_m\}}|\Delta Y_s||A_s|\right]\mathrm{d}s+\int_0^t\mathbb{E}\!\left[\mathbbm{1}_{\{s\leq\delta_m\}}\|B_s\|_{\mathrm{F}}^2\right]\mathrm{d}s.
			\end{aligned}
		\end{equation}
		Since $U^1_t,U^2_t$ are deterministic, $\|\Delta U_t\|_{\mathrm{F}}\leq\mathbb{E}[D_t]^{1/2}$. Hence, from \eqref{eq: f uniqueness final estimate} and Young's inequality,
		\begin{equation*}
			2\|\Delta U_t\|_{\mathrm{F}}\|f(t,U^1_t,Y^1_t)-f(t,U^2_t,Y^2_t)\|_{\mathrm{F}}\leq C_T(1+L_n^2)\mathbb{E}[D_t]+\frac{C_T}{n^\varepsilon}.
		\end{equation*}
		Moreover, by Cauchy--Schwarz inequality, Young's inequality and \eqref{eq: A uniqueness estimate},
		\begin{equation*}
			2\mathbb{E}[|\Delta Y_t||A_t|]\leq\mathbb{E}[|\Delta Y_t|^2]+\mathbb{E}[|A_t|^2]\leq C_T(1+L_n^2)\mathbb{E}[D_t]+\frac{C_T}{n^\varepsilon}.
		\end{equation*}
		Together with \eqref{eq: B uniqueness estimate}, this gives
		\begin{equation*}
			\mathbb{E}[D_{t\wedge\rho_m}]\leq C_T(1+L_n^2)\int_0^t\mathbb{E}[D_s]\,\mathrm{d}s+\frac{C_T}{n^\varepsilon}.
		\end{equation*}
		Letting $m\to\infty$ and using Fatou's lemma, we obtain, for every $n\geq2$,
		\begin{equation}\label{eq: master uniqueness D}
			\mathbb{E}[D_t]\leq C_T(1+L_n^2)\int_0^t\mathbb{E}[D_s]\,\mathrm{d}s+\frac{C_T}{n^\varepsilon},\qquad t\in[0,T].
		\end{equation}

		Define now $G(t):=\int_0^t\mathbb{E}[D_s]\,\mathrm{d}s$. The function $G$ is nonnegative and absolutely continuous, with $G(0)=0$, and $G'(t)=\mathbb{E}[D_t]$ for a.e.\ $t\in[0,T]$. Therefore, from \eqref{eq: master uniqueness D},
		\begin{equation}\label{eq: master uniqueness G}
			G'(t)\leq C_T(1+L_n^2)G(t)+\frac{C_T}{n^\varepsilon},\qquad n\geq2,
		\end{equation}
		for a.e.\ $t\in[0,T]$. By Assumption \ref{ass: log growth L_n},
		\begin{equation*}
			L_n^2\leq c_{\sqrt{\mathrm{log}}}^2\sqrt{\log n}^2= c_{\sqrt{\mathrm{log}}}^2\log n,
		\end{equation*}
		and hence, after changing the constant $C_T$,
		\begin{equation}\label{eq: G log n uniqueness}
			G'(t)\leq C_T(1+\log n)G(t)+\frac{C_T}{n^\varepsilon}.
		\end{equation}
		
		We have that \eqref{eq: G log n uniqueness} holds for every integer $n\geq2$. Then for every $t$ such that it holds $0<G(t)\leq2^{-\varepsilon}$ we choose $n=n(t):=\left\lceil G(t)^{-1/\varepsilon}\right\rceil$. Then $n(t)\geq2$ and
		\begin{equation*}
			n(t)^{-\varepsilon}\leq G(t),\qquad n(t)\leq2G(t)^{-1/\varepsilon},
		\end{equation*}
		so that
		\begin{equation*}
			\log n(t)\leq\log2+\frac1\varepsilon\log\frac1{G(t)}.
		\end{equation*}
		Consequently, for a positive constant $C_{T,\varepsilon}$,
		\begin{equation}\label{eq: Osgood uniqueness final}
			G'(t)\leq C_{T,\varepsilon}G(t)\left(1+\log\frac1{G(t)}\right)
		\end{equation}
		for a.e.\ $t$ such that $0<G(t)\leq2^{-\varepsilon}$.
		
		Consider the function $\rho(r):=r\left(1+\log\frac1r\right)$ for $r\in(0,1]$, and set $\rho(0):=0$. We have
		\begin{equation*}
			\int_{0^+}\frac{\mathrm{d}r}{\rho(r)}=\int_{0^+}\frac{\mathrm{d}r}{r(1+\log(1/r))}=+\infty.
		\end{equation*}
		Define the \emph{deterministic hitting time}
		\begin{equation*}
			\tau_{2^{-\varepsilon}}:=\inf\{t\in[0,T]:G(t)\geq 2^{-\varepsilon}\}\wedge T.
		\end{equation*}
		Since $G$ is nonnegative and absolutely continuous, we have $G'(t)=0$ for a.e.\ $t$ such that $G(t)=0$. Therefore, \eqref{eq: Osgood uniqueness final} yields
		\begin{equation*}
			G'(t)\leq C_{T,\varepsilon}\rho(G(t))
		\end{equation*}
		for a.e.\ $t\in[0,\tau]$. Since $G(0)=0$, integrating gives
		\begin{equation*}
			G(t)\leq C_{T,\varepsilon}\int_0^t\rho(G(s)),\mathrm{d}s,\qquad t\in[0,\tau_{2^{-\varepsilon}}],
		\end{equation*}
		and Osgood's lemma \cite{bihari1956generalization} together with
		\begin{equation*}
			\int_{0^+}\frac{\mathrm{d}r}{\rho(r)}=+\infty,
		\end{equation*}
		implies that $G(t)=0$ for all $t\in[0,\tau_{2^{-\varepsilon}}]$. In particular, $G$ cannot reach ${2^{-\varepsilon}}$, since otherwise continuity would imply $G(\tau_{2^{-\varepsilon}})=2^{-\varepsilon}>0$, contradicting $G(\tau_{2^{-\varepsilon}})=0$. Hence
		\begin{equation*}
			G(t)=0,\qquad t\in[0,T].
		\end{equation*}
		
		Returning to \eqref{eq: master uniqueness D}, for every $n\geq2$ we have
		\begin{equation*}
			\mathbb{E}[D_t]\leq\frac{C_T}{n^\varepsilon}.
		\end{equation*}
		Letting $n\to\infty$ gives $\mathbb{E}[D_t]=0$ for every $t\in[0,T]$. Hence $D_t=0$ almost surely for every fixed $t\in[0,T]$, and therefore $U^1_t=U^2_t$ and $Y^1_t=Y^2_t$ almost surely for every fixed $t\in[0,T]$. Using the a.s.\ continuity of both strong DO solutions, we conclude that the two solutions are indistinguishable for all $t\in[0,T]$, and therefore the strong DO solution is pathwise unique.
	\end{proof}
\end{Proposition}

	In case of elliptic diffusion, the strong DO solution exists globally. We consider the notation $A \succ B$ (respectively $A \succeq B$) with $A,B$ square matrices to indicate that $A-B$ is positive definite (respectively positive semidefinite).
	\begin{Assumption}[Elliptic diffusion]\label{ass: diff}
		There exists a positive constant $\sigma_{B}$ such that $$b(t, x)b(t,x)^{\top}\succeq\sigma_{B} \cdot I_{d \times d} \succ 0,$$
		for all $t \in [0,+\infty)$ and for all $ x \in \mathbb{R}^d$. 
	\end{Assumption}
	
	\begin{Theorem}\label{thm: global well-posedness ll}
		If Assumption \ref{ass: diff} holds, then the strong DO solution exists for all $t \in [0,+\infty)$.
	\begin{proof}
	  Thanks to \cite[Proposition 4.5]{kazashi2025dynamical}, one has that the Gramian $C_{Y_t}$ is strictly positive for all $t \geq 0$. Then, one can prove similar results for Lemma \ref{lem: C} and Propositions \ref{prop: F}, \ref{prop: F cont}, and \ref{prop: compact F} under a suitable restarting procedure. Finally, the proof of global existence follows verbatim to \cite[Theorem 4.6]{kazashi2025dynamical}.
	\end{proof}
	\end{Theorem}

\section*{Conclusion}
\addcontentsline{toc}{section}{Conclusion}
In this article, we established the existence of the DO equations under local-Lipschitzianity combined with linear-growth bound of the drift and the diffusion. 

From a theoretical standpoint, a natural development of this article is to prove the uniqueness of DLRA equations under the same Lipschitz condition. However, due to the presence of the expectation in the equation of the deterministic basis, and hence due to the involvement of all the paths of the stochastic one, this statement is not trivial. Furthermore, it is also interesting to expand the well-posed DLRA framework for SDEs to other conditions, such as weak-monotonicity of the drift, which is a work in progress.

\section*{Acknowledgements}
This work has also been supported by the Swiss National Science Foundation under the
Project n. 200518 “Dynamical low rank methods for uncertainty quantification and data assimilation”.

\printbibliography

\end{document}